\newtheorem{thm}{Theorem}[section]
\newtheorem{lem}[thm]{Lemma}
\newtheorem{cor}[thm]{Corollary}
\newtheorem{prop}[thm]{Proposition}
\theoremstyle{definition}
\theoremstyle{remark}
\newtheorem{rem}[thm]{Remark}
\numberwithin{equation}{section}
\newcommand{\set}[1]{\left\{#1\right\}}
\newcommand{\norm}[1]{\|#1\|}
\newcommand{\abs}[1]{|#1|}
\newcommand{\N}{\mathbb N}
\newcommand{\R}{\mathbb R}
\newcommand{\C}{\mathbb C}
\newcommand{\D}{\mathbb D}
\newcommand{\DD}{\overline{\mathbb D}}
\newcommand{\hol}{\mathrm{Hol}}
\newcommand{\Ker}{\mathrm{Ker\,}}
\newcommand{\Ran}{\mathrm{Ran\,}}
\newcommand{\aut}{\mathrm{Aut\,}}
\newcommand{\seq}[1]{(#1(k))_{k=0}^{\infty}}
\newcommand{\har}[1]{\mathrm{H}^2_{#1}}
\newcommand{\mult}{\mathrm{Mult}}
\begin{document}

% \title[short text for running head]{full title}
\title{On the similarity of powers of operators with flag structure}

%    Only \author and \address are required; other information is
%    optional.  Remove any unused author tags.

%    author one information
% \author[short version for running head]{name for top of paper}
\author{Jianming Yang}
\address{Department of MathematicsHebei Normal University 050016 Shijiazhuang,
P.R.China}
\curraddr{curraddr}
\email{nova\_yang@petalmail.com}
%%\thanks{thanks}

%    author two information
\author{Kui Ji}
\address{Department of MathematicsHebei Normal University 050016 Shijiazhuang,
P.R.China}
\curraddr{curraddr}
\email{jikui@hebtu.edu.cn}
%%\thanks{thanks}

%    \subjclass is required.
\subjclass[2020]{Primary 47B13, 47B32 Secondary 32L05, 47B35}

\date{Dec.27th, 2023}

%%\dedicatory{dedicatory}

%    "Communicated by" -- provide editor's name; required.
%%\commby{commby}

%    Abstract is required.
\begin{abstract}
  Let $\mathrm{L}^2_a(\D)$ be the classical Bergman space and denote $M_h$ for the multiplication operator by a function $h$.
  Let $B$ be a finite Blaschke product with order $n$.
  An open question proposed by R. G. Douglas is whether the operators $M_B$ on $\mathrm{L}^2_a(\D)$ similar to $\oplus_1^n M_z$ on $\oplus_1^n \mathrm{L}^2_a(\D)$?
  The question was answered in the affirmative, not only for Bergman space but also for many other Hilbert spaces with  reproducing kernels.
  Since the operator $M_z^*$  is in Cowen-Douglas class $B_1(\D)$ in many cases, Douglas's question can be expressed as a version for operators in $B_1(\D)$, and it is affirmative for many operators in $B_1(\D)$.
  A natural question is how about Douglas's question in the version for operators in Cowen-Douglas class $B_n(\D)$ ($n>1$)?
  In this paper, we investigate a family of operators, which are in a norm dense subclass of Cowen-Douglas class $B_2(\D)$, and give a negative answer.
  This indicates that Douglas's question cannot be directly generalized to general Hilbert spaces with vector-valued analytical reproducing kernel.
\end{abstract}

\maketitle

%    Text of article.
\section{Introduction}

  Let $\D$ be the open unit disk in $\C$ and $\DD$ be the closed unit disk. A function $B$ on $\D$ is said a finite Blaschke product with order $n$ if it has the following form
  $$ B(z) = e^{i \theta} \prod_{j=1}^{n} \frac{z-a_j}{1-\bar{a_j}z},\ z \in \D, $$
  for some $\theta \in [0,2\pi)$ and some $a_1,\dots,a_n \in \D$. Let $\mathrm{L}^2_a(\D)$ be the classical Bergman space on $\D$ and denote $M_h$ for the multiplication operator on $\mathrm{L}^2_a(\D)$ by a function $h \in \hol(\D)$. A previously open question proposed by R. G. Douglas (Question 6 in \cite{Douglas2007}) is whether the operator $M_B$ on $\mathrm{L}^2_a(\D)$ similar to $\oplus_1^n M_z$ on $\oplus_1^n \mathrm{L}^2_a(\D)$, where $n$ is the order of the finite Blaschke product $B$?

  The question was answered in the affirmative (see \cite{Jiang2007} or \cite{GUO2011}).
  Later, the question was also answered in the affirmative on many other analytic function Hilbert spaces, such as weighted Bergman spaces $\mathrm{A}^2_{\alpha}$ (see \cite{JIANG2010}), Sobolev disk algebra $\mathrm{R}(\D)$ (see \cite{Wang2009} or \cite{Ji2013}), and Dirichlet space $\mathfrak{D}$ (see \cite{Li2022}). Recently in \cite{hou2023}, Hou and Jiang proved that the question still holds on the weighted Hardy space of polynomial growth, which covers the weighted Bergman space, the weighted Dirichlet space, and many weighted Hardy spaces defined without measures.

  Douglas's question originated in the study of reducing subspaces of analytic multiplication operators on Bergman space. Before the question was proposed, there had been many studies of the reducing subspaces of the multiplication operator by a finite Blaschke product (see \cite{Zhu2000}, \cite{Hu2004}, and \cite{Guo2009}). An application of Douglas's question, together with the technology of strongly irreducible operator and $\mathrm{K}_0$-group, is the similarity of analytic Toeplitz operators (e.g. Theorem 1.1 in \cite{JIANG2010}), which is analogous to the result on Hardy space (see \cite{COWEN1982}).

  Note that $M_B$ is also equal to the analytic function calculus $B(M_z)$, then we can describe Douglas's question in the following general form: let $T$ be a certain bounded linear operator on a complex separable Hilbert space $H$ and suppose $\sigma(T) \subseteq \DD$, then for any finite Blaschke product $B$, is the operator $B(T)$ similar to $\oplus_1^n T$ on $\oplus_1^n H$, where $n$ is the order of $B$?

Obviously,  an operator $T$ satisfies Douglas's question if and only if so does its adjoint $T^*$. Since the adjoints $M_z^*$ of the multiplication operators $M_z$ by $z$ on many analytic function Hilbert spaces are in Cowen-Douglas class $B_1(\D)$ proposed in \cite{Cowen1978}, the works mentioned earlier correspond to Douglas's question in the case that $T$ is in $B_1(\D)$. In that way, how about Douglas's question in the case that $T$ is in Cowen-Douglas class $B_n(\D)(n>1)$?

  For a long time, there has been a lack of sufficient understanding of Cowen-Douglas class $B_n(\Omega)$ for the higher rank case. A new but important subclass $FB_n(\Omega)$ has been introduced by G. Misra  in \cite{JI2017}.
  All irreducible homogeneous operators in $B_n(\D)$ are in $FB_n(\D)$ (see \cite{JI2017}), and the class $FB_n(\Omega)$ (or even its subclass $CFB_n(\Omega)$) is norm dense in $B_n(\Omega)$ (see \cite{Jiang2023}).  G. Misra etc. proved the operator in $FB_n(\Omega)$  possesses a flag structure. It is also proved that the flag structure is rigid, that is, the unitary equivalence class of the operator and the flag structure determine each other (see \cite{JI2017}).

  In this paper, we investigate a family of operators in class $FB_2(\D)$ and give a negative answer to Douglas's question.

  Denote by $\hol(\D)$ the set of all analytical functions on $\D$ and denote by $\hol(\DD)$ the set of all analytical functions on $\DD$. Let $\har{\alpha}$ be a weighted Hardy space with weight sequence $\alpha$ on $\D$ and denote $M_{\alpha,h}$ for the multiplication operator by $h \in \hol(\D)$ on $\har{\alpha}$. Let $\har{\beta}$ be another weighted Hardy space with weight sequence $\beta$ on $\D$ and denote $M_{\alpha,\beta,h}$ for the multiplication operator by $h \in \hol(\D)$ from $\har{\alpha}$ to $\har{\beta}$. These concepts will be introduced in detail later.

  The following theorem is the main theorem of this paper.

\begin{thm}\label{thm 4.1}
  Let $\har{\alpha}$ and $\har{\beta}$ be two M$\ddot{o}$bius invariant weighted Hardy spaces with property A for some integer $n_0 \geq 2$. Suppose the weight sequences $\alpha$ and $\beta$ satisfy
  $$ \sup_{k \geq 0} \frac{\beta(k)}{\alpha(k)} < \infty \quad \text{and} \quad \lim_{k \to \infty} \frac{\alpha(k)}{k\beta(k)} = 0. $$
  Denote $T$ be the operator
  $$ \begin{pmatrix}
       M_{\alpha,z}^* & M_{\alpha,\beta,h}^* \\
       0 & M_{\beta,z}^* \\ \end{pmatrix} $$
  on $\har{\alpha} \oplus \har{\beta}$, where $h \in \hol(\DD)$.

  If for any finite Blaschke product $B$, $B(T)$ is similar to $\oplus_1^n T$, where $n$ is the order of $B$, then $h=0$.
  % $\sigma(T) \subseteq \DD$.
\end{thm}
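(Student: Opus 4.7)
The plan is to argue by contradiction: assume $h \not\equiv 0$ and exhibit a finite Blaschke product $B$ of order $n$ for which $B(T)$ and $\oplus_1^n T$ are not similar. The strategy is to use the known $B_1(\D)$ version of Douglas's question to diagonalize the diagonal blocks of the two upper-triangular forms, reducing the problem to a Rosenblum-type (coboundary) equation on the off-diagonal entry $M_{\alpha,\beta,h}^*$, and then to exploit the rigidity of the flag structure of $FB_2(\D)$ together with the two weight hypotheses to show this equation has no solution when $h \not\equiv 0$.

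First I would record the relevant Cowen--Douglas data. The operator $T$ sits in (a dense subclass of) $FB_2(\D)$ with a holomorphic frame at $w \in \D$ given by $\gamma_1(w)=(0,K_w^\beta)$ and a lift $\gamma_0(w)=(f_w,K_w^\beta)$ obtained by solving $(M_{\alpha,z}^*-w)f_w=-M_{\alpha,\beta,h}^*K_w^\beta$; the line subbundle $L_T=\mathrm{span}\,\gamma_1$ and quotient $E_T/L_T$ correspond to $M_{\beta,z}^*$ and $M_{\alpha,z}^*$ respectively, and $h$ is encoded in the second fundamental form of $L_T$ inside $E_T$. The bundle of $\oplus_1^n T$ is an obvious $n$-fold sum of these data, while on the open set where $B$ is an $n$-to-$1$ covering the bundle of $B(T)$ at $w$ is $\bigoplus_{z\in B^{-1}(w)}E_T|_z$ with flag $\bigoplus_{z\in B^{-1}(w)}L_T|_z$.

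Next, invoking the $B_1$ solutions of Douglas's question (\cite{hou2023} and its predecessors), I would pick invertible intertwiners $S_\alpha,S_\beta$ with $S_\alpha^{-1}B(M_{\alpha,z}^*)S_\alpha=\oplus_1^n M_{\alpha,z}^*$ and likewise for $\beta$. Conjugating $B(T)$ by $S_\alpha\oplus S_\beta$ puts both $B(T)$ and $\oplus_1^n T$ into upper-triangular form with the same diagonal blocks, and off-diagonal entries, say $\widetilde Y_{B,h}$ and $X_h:=\oplus_1^n M_{\alpha,\beta,h}^*$, respectively. A direct block computation then reduces the asserted similarity to a coboundary equation of the form
$$ S_1\widetilde Y_{B,h}-X_h S_2 \;=\; \bigl(\oplus_1^n M_{\alpha,z}^*\bigr)Q - Q\bigl(\oplus_1^n M_{\beta,z}^*\bigr), $$
where $Q$ is bounded and $S_1,S_2$ are invertible elements of the commutants of $\oplus_1^n M_{\alpha,z}^*$ and $\oplus_1^n M_{\beta,z}^*$.

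Finally I would evaluate this equation on the reproducing-kernel frame: $\widetilde Y_{B,h}$ acts on sections like $K_w^\beta\otimes e_j$ by a matrix whose entries are built from the values of $h$ at the preimages $B^{-1}(w)$, while $X_h$ contributes the scalar matrix $h(w)\cdot I_n$. The hypothesis $\sup_k\beta(k)/\alpha(k)<\infty$ makes every operator in the equation bounded and the relevant frames analytic up to $\partial\D$; the asymptotic condition $\alpha(k)/(k\beta(k))\to 0$ is the critical ingredient that rules out boundary-concentrated bounded $Q$ which could otherwise absorb a nontrivial $h$. Varying the Blaschke product (e.g.\ through $B(z)=((z-a)/(1-\bar a z))^n$ with $a$ ranging over $\D$) yields enough such identities to force $h\equiv 0$. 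The principal obstacle is the middle step: similarity, unlike unitary equivalence, permits an extra invertible bundle automorphism on top of the rigidity of the flag structure given in \cite{JI2017}, so one must pin down precisely which cohomology class of the second fundamental form is genuinely a similarity invariant; the two weight hypotheses are tailor-made to rule out the boundary behaviour that would otherwise allow the Rosenblum coboundary to kill an arbitrary $h$, and this is where the technical heart of the proof will lie.
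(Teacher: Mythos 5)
Your outline does not close the argument, and two of its steps fail as stated. First, the reduction of the similarity $B(T)\sim\oplus_1^n T$ to a single coboundary equation $S_1\widetilde Y_{B,h}-X_hS_2=(\oplus_1^n M_{\alpha,z}^*)Q-Q(\oplus_1^n M_{\beta,z}^*)$ presupposes that the intertwiner is block upper triangular with invertible diagonal entries lying in the commutants of $\oplus_1^n M_{\alpha,z}^*$ and $\oplus_1^n M_{\beta,z}^*$. The lemma guaranteeing such a triangular form (Proposition 3.3 of \cite{JI2017}, quoted here as Lemma \ref{lem 3.1}) applies only to operators in $FB_2(\Omega)$, whose diagonal blocks are in $B_1(\Omega)$; for $n\geq 2$ the operators $B(T)$ and $\oplus_1^n T$ have diagonal blocks in $B_n(\D)$, so they are not in $FB_2(\D)$ and no such structure theorem is available --- a general similarity need not respect your block decomposition at all. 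Second, you invoke the $B_1(\D)$ solution of Douglas's question to diagonalize $B(M_{\alpha,z}^*)$ for an arbitrary finite Blaschke product $B$, but under the hypotheses of the theorem (M\"obius invariance plus property A for a single $n_0$) this is only available for $B\in\aut(\D)$ and for $B(z)=z^{n_0}$; the results of \cite{hou2023} require polynomial growth of the weights. Finally, the step you yourself flag as ``the technical heart'' --- showing that no bounded $Q$ can absorb a nontrivial $h$ as $B$ varies --- is exactly the part that is missing, and it is not clear it can be carried out by kernel evaluations alone.

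The paper avoids all of this by a two-stage argument that never analyzes a general $B$. Stage one: applying the hypothesis only to order-one Blaschke products makes $T$ weakly homogeneous, and Proposition \ref{prop 3.5} (which rests on Proposition \ref{prop 3.4}, i.e.\ on the statement $\Ker\sigma_{M_{\alpha,z}^*,M_{\beta,z}^*}\cap\Ran\sigma_{M_{\alpha,z}^*,M_{\beta,z}^*}=\{0\}$ of Proposition \ref{prop 3.2}, which is where the condition $\lim_k\alpha(k)/(k\beta(k))=0$ enters) forces $h$ to be zero-free on $\DD$, whence $T\sim T_{\alpha,\beta,1}$. Stage two: applying the hypothesis only to $B(z)=z^{n_0}$ and using property A, one gets $T_{\alpha,\beta,1}^{n_0}\sim T_{\alpha,\beta,1}\oplus(\oplus_1^{n_0-1}T_{\alpha,\beta,z})$ (Proposition \ref{prop 3.6}); since $1$ and $z$ have different zero sets, $T_{\alpha,\beta,1}\nsim T_{\alpha,\beta,z}$, both are strongly irreducible, and the $\mathrm{K}_0$-groups of the commutants ($\mathbb{Z}^2$ versus $\mathbb{Z}$, via Theorem \ref{Jiang}) yield the contradiction. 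If you want to salvage your approach, the $\mathrm{K}_0$ invariant of the commutant is the tool that replaces the direct coboundary analysis once the diagonal blocks leave $B_1(\D)$.
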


  In other words, under the hypothesis of Theorem \ref{thm 4.1}, for any non-zero function $h$ in $\hol(\DD)$, the operator
  $$ \begin{pmatrix}
    M_{\alpha,z}^* & M_{\alpha,\beta,h}^* \\
    0 & M_{\beta,z}^* \end{pmatrix} $$
  doesn't satisfy Douglas's question.

\section{Preliminaries}

\subsection{Weighted Hardy space}

  In the present article, we denote the power series coefficients of an analytical function $f$ as $\seq{\hat{f}}$, i.e. $f(z)=\sum_{k=0}^{\infty} \hat{f}(k) z^k$.

\subsubsection{}

  Let $\alpha = \seq{\alpha}$ be a positive sequence, then the weighted Hardy space $\har{\alpha}$ with weight sequence $\alpha$ on $\D$ is defined as
  $$ \set{f \in \hol(\D) \colon \sum_{k=0}^{\infty} \abs{\hat{f}(k)}^2 \alpha(k)^2 < \infty}. $$ $\har{\alpha}$ has an inner product $\langle \cdot , \cdot \rangle_\alpha$ defined as
  $$ \langle f,g \rangle_\alpha = \sum_{k=0}^{\infty} \hat{f}(k) \overline{\hat{g}(k)} \alpha(k)^2, $$
  where $f,g \in \har{\alpha}$. \textbf{In the present article, we always assume the weight sequence $\alpha$ satisfying}
  \begin{equation}\label{equation 1.1}
    \lim_{k \to \infty} \frac{\alpha(k+1)}{\alpha(k)} = 1.
  \end{equation}
  In this case, $\har{\alpha}$ is a Hilbert space and contains $\hol(\DD)$. What's more, if a complex sequence $a = \seq{a}$ satisfies $\sum_{k=0}^{\infty} \abs{a(k)}^2 \alpha(k)^2 < \infty$ and a function $f$ is defined as $f(z) = \sum_{k=0}^{\infty} a(k) z^k$, then $f \in \hol(\D)$ and hence $f \in \har{\alpha}$.

  The norm of $f \in \har{\alpha}$ is $\norm{f}_\alpha = (\sum_{k=0}^{\infty} \abs{\hat{f}(k)}^2 \alpha(k)^2)^{\frac{1}{2}}$. Let $e_k(z)=z^k$, $z \in \D$, then $\{ e_k \}_{k=0}^{\infty}$ forms an orthogonal basis of $\har{\alpha}$ (thus, $\har{\alpha}$ is separable). In addition, $\norm{e_k}_\alpha = \alpha(k)$ and $f = \sum_{k=0}^{\infty} \hat{f}(k) e_k$ in the sense of norm $\norm{\cdot}_\alpha$. % for any $f \in \har{\alpha}$.

  It can be checked that for each $\omega \in \D$, the linear function $\delta_\omega \colon f \in \har{\alpha} \to f(\omega) \in \C$ is continuous. Then by Riesz representation Theorem, $\har{\alpha}$ has a reproducing kernel $\{ k_\omega \}_{\omega \in \D}$, i.e. for each $\omega \in \D$, $k_\omega \in \har{\alpha}$ and $\langle f,k_\omega \rangle_\alpha = f(\omega)$ whenever $f \in \har{\alpha}$.

\subsubsection{}

  Let $\har{\alpha}$ be a weighted Hardy space. The multiplier algebra $\mult(\har{\alpha})$ of $\har{\alpha}$ is defined as the set of all $h \in \hol(\D)$ such that $h f \in \har{\alpha}$ for any $f \in \har{\alpha}$.
%  $$ \mult(\har{\alpha}) := \set{h \in \hol(\D) \colon h f \in \har{\alpha} \ \text{for any} \ f \in \har{\alpha}}. $$
  For each $h \in \mult(\har{\alpha})$, one can define a multiplication operator $M_{\alpha,h} \colon f \in \har{\alpha} \to h f \in \har{\alpha}$, which is bounded by closed graph Theorem. Sometimes $M_{\alpha,h}$ is also abbreviated as $M_{h}$. As well known, $\mult(\har{\alpha}) \subseteq \mathrm{H}^{\infty}(\D) \cap \har{\alpha}$, where $ \mathrm{H}^{\infty}(\D)$ is the set of all bounded analytic functions on $\D$. The Mapping $h \in \mult(\har{\alpha}) \to M_{\alpha,h} \in \mathcal{L}(\har{\alpha})$ is an algebraic monomorphism with $1 \to I$, and for each $h \in \mult(\har{\alpha})$, $M_{\alpha,h}$ is invertible if and only if $\frac{1}{h} \in \mult(\har{\alpha})$.

  As well known, the assumption (\ref{equation 1.1}) guarantees that the multiplication operator $M_{\alpha,z}$ is a bounded linear operator on $\har{\alpha}$ and $\norm{M_{\alpha,z}} = \sup_{k \geq 0} \frac{\alpha(k+1)}{\alpha(k)}$. Denote $\{ M_{\alpha,z} \}'$ for the commutant algebra of $M_{\alpha,z}$, i.e. the set of all $X \in \mathcal{L}(\har{\alpha})$ such that $X M_{\alpha,z} = M_{\alpha,z} X$.
%  $$\{ M_{\alpha,z} \}' = \set{ X \in \mathcal{L}(\har{\alpha}) \colon X M_{\alpha,z} = M_{\alpha,z} X }.$$
%  Then it can be verified that $X$ is in $\{ M_{\alpha,z} \}'$ if and only if there exists $h \in \mult(\har{\alpha})$ such that $X = M_{\alpha,h}$.
  Then it can be verified that $X$ is in $\{ M_{\alpha,z} \}'$ if and only if $X$ is equal to $M_{\alpha,h}$ for some $h \in \mult(\har{\alpha})$.
%  Then it can be verified that
%  $$\{ M_{\alpha,z} \}' = \set{ M_{\alpha,h} \colon h \in \mult(\har{\alpha}) }.$$

  Furthermore, the assumption (\ref{equation 1.1}) also guarantees that $\hol(\DD) \subseteq \mult(\har{\alpha})$ (see Lemma 3.4 in \cite{Fricain2013} or see \cite{hou2023}). Then it can be verified that the spectrum $\sigma(M_{\alpha,z})$ of $M_{\alpha,z}$ is contained in $\DD$ and for any $h \in \hol(\DD)$, the analytic function calculus $h(M_{\alpha,z})$ is $M_{\alpha,h}$.

% \vskip 5pt

  Let $\har{\alpha}$ and $\har{\beta}$ be two weighted Hardy spaces. The multiplier $\mult(\har{\alpha},\har{\beta})$ from $\har{\alpha}$ to $\har{\beta}$ is defined as the set of all $h \in \hol(\D)$ such that $h f \in \har{\beta}$ for any $f \in \har{\alpha}$.
%  $$ \mult(\har{\alpha},\har{\beta}) := \set{h \in \hol(\D) \colon h f \in \har{\beta} \ \text{for any} \ f \in \har{\alpha}}. $$
  For each $h \in \mult(\har{\alpha},\har{\beta})$, one can define a multiplication operator $M_{\alpha,\beta,h} \colon f \in \har{\alpha} \to h f \in \har{\beta}$, which is bounded by closed graph Theorem. Sometimes $M_{\alpha,\beta,h}$ is also abbreviated as $M_{h}$. Obviously, $\mult(\har{\alpha},\har{\beta}) \subseteq \har{\beta}$.

%  In addition, it can also be verified that $X \in \mathcal{L}(\har{\alpha},\har{\beta})$ satisfies $X M_{\alpha,z} = M_{\beta,z} X$ if and only if there exists $h \in \mult(\har{\alpha},\har{\beta})$ such that $X = M_{\alpha,\beta,h}$.
  In addition, it can also be verified that $X \in \mathcal{L}(\har{\alpha},\har{\beta})$ satisfies $X M_{\alpha,z} = M_{\beta,z} X$ if and only if $X$ is equal to $M_{\alpha,\beta,h}$ for some $h \in \mult(\har{\alpha},\har{\beta})$.
%  In addition, it also can be verified that
%  $$ \set{ X \in \mathcal{L}(\har{\alpha},\har{\beta}) \colon X M_{\alpha,z} = M_{\beta,z} X } = \set{ M_{\alpha,\beta,h} \colon h \in \mult(\har{\alpha},\har{\beta}) }. $$

\begin{rem}
  A simple example of weighted Hardy space involved in the present article is the Hilbert space $\mathrm{H}_{(\lambda)}$. For any $\lambda \in \R$, $\mathrm{H}_{(\lambda)}$ is defined as the weighted Hardy space $\har{\alpha_{\lambda}}$ with weight sequence $\alpha_{\lambda}(k) = (k+1)^{\lambda}$, $k=0,1,\dots$.
  % Obviously, the weight sequence $\alpha_{\lambda}$ satisfies the assumption (\ref{equation 1.1}).
  This type of space contains many classical analytic function spaces on $\D$. For example, the classical Hardy space $\mathrm{H}^2(\D)$ ($\lambda = 0$), the classical Bergman space $\mathrm{L}^2_a(\D)$ ($\lambda = -\frac{1}{2}$) and the classical Dirichlet space $\mathfrak{D}$ ($\lambda = \frac{1}{2}$).
\end{rem}

% \begin{Hou}
\begin{rem}
  A complicated example of weighted Hardy space involved in the present article is the weighted Hardy space of polynomial growth (which has been studied in \cite{hou2023} recently). A weighted Hardy space $\har{\alpha}$ (usually assume $\alpha(0)=1$) is called of polynomial growth if the weight sequence $\alpha$ satisfies
  $$ \sup_{k \in \N} (k+1) \abs{\frac{\alpha(k)}{\alpha(k-1)} - 1} < \infty. $$
%  The above condition is also equivalent to that there exists $N \in \N$ such that for each $k \in \N$,
%  $$ \frac{k+1}{k+N+1} \leq \frac{\alpha(k)}{\alpha(k-1)} \leq \frac{k+N+1}{k+1}. $$
%  % (see \cite{hou2023}).
  The weighted Hardy spaces of polynomial growth cover the weighted Bergman space, the weighted Dirichlet space, and many weighted Hardy spaces defined without measures. % (see \cite{hou2023} again).
\end{rem}
% \end{Hou}

\subsection{Cowen-Douglas operators}

  Recall some basic concepts of the Cowen-Doug-las operator class $B_n(\Omega)$, which was introduced in \cite{Cowen1978}, and the subclass $FB_2(\Omega)$ of $B_2(\Omega)$, which was introduced in \cite{JI2017}.

  Let $H$ be a complex separable Hilbert space and $\mathcal{L}(H)$ denote the collection of bounded linear operators on $H$. For $\Omega$ a connected open subset of $\C$ and $n$ a positive integer, let $B_n(\Omega)$ denote the operators $T$ in $\mathcal{L}(H)$ which satisfy:

  (a) $\Omega \subseteq \sigma(T) = \set{\omega \in \C \colon T-\omega \ \text{not invertible}}$;

  (b) $\Ran(T-\omega) = H$ for $\omega \in \Omega$;

  (c) $\vee_{\omega \in \Omega}\, \Ker(T-\omega) = H$; and

  (d) $\dim \Ker(T-\omega) =n$ for $\omega \in \Omega$.

  As well known, the adjoint $M_{\alpha,z}^*$ of the multiplication operator $M_{\alpha,z}$ acting on the weighted Hardy space $\har{\alpha}$ is in class $B_1(\D)$.

  The operator class $FB_2(\Omega)$ is the set of all bounded linear operators $T$ of the form $$\begin{pmatrix}
  T_0 & S \\
  0 & T_1 \\
  \end{pmatrix},$$
  where $T_0$ and $T_1$ are in class $B_1(\Omega)$ and the operator $S$ is a non-zero intertwiner between them, i.e. $T_0 S = S T_1$.

  Let $\har{\alpha}$ and $\har{\beta}$ be two weighted Hardy spaces. Throughout this paper, we will denote $T_{\alpha,\beta,h}$ as the operator
  $$ \begin{pmatrix}
       M_{\alpha,z}^* & M_{\alpha,\beta,h}^* \\
       0 & M_{\beta,z}^* \\ \end{pmatrix} $$
  on $\har{\alpha} \oplus \har{\beta}$ for a function $h$ in $\mult(\har{\alpha},\har{\beta})$.
One can see  that $T_{\alpha,\beta,h}$ is in class $FB_2(\D)$ whenever $h$ is a non-zero function.

\section{Two propositions}\label{step 1}

  Let $\har{\alpha}$ and $\har{\beta}$ be two weighted Hardy spaces. Let $T = T_{\alpha,\beta,h}$, where $h$ is a non-zero function in $\mult(\har{\alpha},\har{\beta})$. To investigate whether for any finite Blaschke product $B$, $B(T)$ is similar to $\oplus_1^n T$, where $n$ is the order of $B$, we will treat $B$ in two cases: the case of $B(z) = e^{i \theta} \frac{z-a}{1-\bar{a}z}$ and the case of $B(z) = z^n$. In \ref{step 2}, we deal with the first case and the main point is Proposition \ref{prop 3.5}. In \ref{step 3}, we deal with the second case and the main points are Proposition \ref{prop 3.6} and Proposition \ref{prop 3.9}. Before that, in \ref{step 1}, we will first prove the following two propositions: Proposition \ref{prop 3.2} and Proposition \ref{prop 3.4}.

  In this paper, we write $T \sim \tilde{T}$ for two bounded linear operators $T$ and $\tilde{T}$ if $T$ is similar to $\tilde{T}$ (i.e. there exists an invertible operator $X$ such that $T X = X \tilde{T}$).

  % \vskip 5pt

  Let $T_0$ and $T_1$ be two bounded linear operators on Hilbert spaces $H_0$ and $H_1$ respectively. Denote $\sigma_{T_0,T_1} (X) := T_0 X-X T_1$ for any $X \in \mathcal{L}(H_1,H_0)$. Then a linear operator $\sigma_{T_0,T_1} \colon \mathcal{L}(H_1,H_0) \to \mathcal{L}(H_1,H_0)$ can be defined. Let $\sigma_{T}$ be the operator $\sigma_{T,T}$.

  From Lemma 2.18 in \cite{JI2017} and Theorem 2.19 in \cite{JI2017}, a useful conclusion can be obtained that for any $T$ in class $B_1(\Omega)$, $\Ker\sigma_{T} \cap \Ran\sigma_{T} = \set{0}$. But we need to slightly generalize this.

\begin{prop}\label{prop 3.2}
  Let $\har{\alpha}$ and $\har{\beta}$ be two weighted Hardy spaces. If the weight sequences $\alpha$ and $\beta$ satisfy condition
  \begin{equation}\label{cond 1}
    \lim_{k \to \infty} \frac{\alpha(k)}{k\beta(k)} = 0,
  \end{equation}
  then $\Ker\sigma_{M_{\alpha,z}^*,M_{\beta,z}^*} \cap \Ran\sigma_{M_{\alpha,z}^*,M_{\beta,z}^*} = \set{0}$.
\end{prop}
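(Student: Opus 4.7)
The plan is to begin with the characterization (recalled in the preliminaries) of the intertwining space: an operator $X \in \mathcal{L}(\har{\beta},\har{\alpha})$ lies in $\Ker\sigma_{M_{\alpha,z}^{*},M_{\beta,z}^{*}}$ exactly when its adjoint $X^{*}$ intertwines $M_{\alpha,z}$ and $M_{\beta,z}$, so $X = M_{\alpha,\beta,h}^{*}$ for some $h \in \mult(\har{\alpha},\har{\beta})$. The whole statement therefore reduces to showing that if such an $X$ also lies in $\Ran\sigma_{M_{\alpha,z}^{*},M_{\beta,z}^{*}}$, then necessarily $h = 0$.

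Suppose accordingly that $M_{\alpha,\beta,h}^{*} = M_{\alpha,z}^{*}Y - Y M_{\beta,z}^{*}$ for some bounded $Y \colon \har{\beta} \to \har{\alpha}$. Taking adjoints and setting $Z := Y^{*} \in \mathcal{L}(\har{\alpha},\har{\beta})$ turns this into
\[
M_{\alpha,\beta,h} \;=\; Z M_{\alpha,z} - M_{\beta,z} Z.
\]
Applying both sides to the monomial $z^{k}$ and writing $g_{k} := Z(z^{k}) \in \har{\beta}$ produces the first-order recursion $g_{k+1} = z g_{k} + h z^{k}$, which by induction on $k$ admits the closed form
\[
g_{k} \;=\; k\, h\, z^{k-1} \;+\; z^{k} g_{0}, \qquad k \geq 1.
\]
The decisive feature is the factor $k$ multiplying $h$; this is exactly what will interact with the hypothesis $\alpha(k)/(k\beta(k)) \to 0$.

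Next I extract Taylor coefficients. Since $\|z^{k}\|_{\alpha} = \alpha(k)$, boundedness of $Z$ yields $\|g_{k}\|_{\beta} \leq \|Z\|\,\alpha(k)$, and therefore $|\hat{g_{k}}(j)|\,\beta(j) \leq \|Z\|\,\alpha(k)$ for every $j \geq 0$. Reading off the coefficient of $z^{k-1+m}$ in $g_{k} = k h z^{k-1} + z^{k} g_{0}$, with the convention $\hat{g_{0}}(-1) = 0$, I obtain
\[
\bigl|k\hat{h}(m) + \hat{g_{0}}(m-1)\bigr|\; \frac{\beta(k-1+m)}{\alpha(k)} \;\leq\; \|Z\|
\]
for every fixed $m \geq 0$ and every $k \geq 1$. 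The standing assumption $\beta(k+1)/\beta(k) \to 1$ implies $\beta(k-1+m)/\beta(k) \to 1$ for each fixed $m$, while the hypothesis (\ref{cond 1}) says exactly $k\beta(k)/\alpha(k) \to \infty$; combining these, $k\beta(k-1+m)/\alpha(k) \to \infty$. If $\hat{h}(m) \neq 0$, the left-hand side therefore blows up as $k \to \infty$, contradicting the uniform bound. Hence $\hat{h}(m) = 0$ for every $m \geq 0$, so $h = 0$ and $X = 0$.

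The main difficulty is conceptual rather than computational: one must spot that iterating the recursion produces the amplifying factor $k$ in front of $h$, and that this is exactly what converts the hypothesis $\alpha(k) = o(k\beta(k))$ into a contradiction with boundedness of $Z$. Without this factor one would only recover the weaker statement $\Ker\sigma = \{0\}$ under a weaker compatibility condition on $\alpha$ and $\beta$; it is precisely the stage-$k$ amplification built up by the recursion that forces the joint kernel–range intersection to be trivial.
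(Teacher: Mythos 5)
Your argument is correct and follows essentially the same route as the paper's own proof: reduce the kernel condition to $X=M_{\alpha,\beta,h}^{*}$ via the intertwining characterization, derive the recursion $g_{k+1}=zg_{k}+hz^{k}$ with closed form $g_{k}=khz^{k-1}+z^{k}g_{0}$, and play the coefficient bound $|\hat{g_{k}}(j)|\beta(j)\leq\|Z\|\alpha(k)$ against the growth of $k\beta(k)/\alpha(k)$ to kill each $\hat{h}(m)$. The only cosmetic difference is that the paper divides by $k$ and compares two limits, whereas you argue by contradiction from the blow-up of the left-hand side; these are the same estimate.
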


\begin{proof}
  Clearly, we just need to explain that for any $Y \in \mathcal{L}(\har{\alpha},\har{\beta})$, the equations $Y M_{\alpha,z} = M_{\beta,z} Y$ and $Y = X M_{\alpha,z} - M_{\beta,z} X$, for some $X \in \mathcal{L}(\har{\alpha},\har{\beta})$, imply $Y=0$.

  The equation $Y M_{\alpha,z} = M_{\beta,z} Y$ is equivalent to $Y = M_{\alpha,\beta,h}$ for some $h \in \mult(\har{\alpha},\har{\beta})$. Then we have $Y = X M_{\alpha,z} - M_{\beta,z} X$, $X M_{\alpha,z} = M_{\beta,z} X + M_{\alpha,\beta,h}$. Hence, $X z^{k} = z X z^{k-1} + h z^{k}$, $k=1,2,\dots$. Let $g := X z^{0} \in \har{\beta}$. Then by induction, we have that $X z^{k} = z^{k} g + k z^{k-1} h$, $k=1,2,\dots$.

  Let $X z^{k} = \sum_{j=0}^{\infty} x_{jk} z^{j}$, $k=0,1,\dots$. Then
  \begin{equation}\label{3.2.1}
    \langle X z^{k},z^{l}\rangle
    = \langle \sum_{j=0}^{\infty} x_{jk} z^{j},z^{l}\rangle
    = \sum_{j=0}^{\infty} x_{jk} \langle z^{j},z^{l}\rangle
    = x_{lk} \beta(l)^2,
  \end{equation}
  where $k,l=0,1,\dots$. On the other hand,
  $$z^{k} g(z) = \sum_{j=0}^{\infty} \hat{g}(j) z^{j+k} = \sum_{j=0}^{\infty} \hat{g}(j-k) z^{j},$$
  $$z^{k} h(z) = \sum_{j=0}^{\infty} \hat{h}(j) z^{j+k} = \sum_{j=0}^{\infty} \hat{h}(j-k) z^{j},$$
  where $k = 0,1,\dots$, and $\hat{g}(j) = 0 = \hat{h}(j)$ whenever $j<0$. Hence,

  \begin{equation}\label{3.2.2}
    \begin{split}
      \langle X z^{k},z^{l}\rangle
      &= \langle z^{k} g + k z^{k-1} h,z^{l}\rangle\\
      &= \langle z^{k} g,z^{l}\rangle + k \langle z^{k-1} h,z^{l}\rangle\\
      &= \langle \sum_{j=0}^{\infty} \hat{g}(j-k) z^{j},z^{l}\rangle + k \langle \sum_{j=0}^{\infty} \hat{h}(j-k+1) z^{j},z^{l}\rangle\\
      &= \hat{g}(l-k) \beta(l)^2 + k \hat{h}(l-k+1) \beta(l)^2,
  \end{split}
  \end{equation}
  where $k=1,2,\dots$ and $l=0,1,\dots$. Note that $X z^0 = g$, equation (\ref{3.2.2}) still holds for $k=0$. From (\ref{3.2.1}) and (\ref{3.2.2}), it follows that
  $$ x_{lk} = \hat{g}(l-k) + k \hat{h}(l-k+1), $$
  where $k,l=0,1,\dots$.

  Let $t=0,1,\dots$, $k=1,2,\dots$, and $l=k+t-1$. Then $ x_{k+t-1,k} = \hat{g}(t-1) + k \hat{h}(t) $.  Hence,
  \begin{equation}\label{3.2.3}
    \frac{x_{k+t-1,k}}{k} = \frac{\hat{g}(t-1)}{k} + \hat{h}(t) \to \hat{h}(t),\ k \to \infty,\ \text{for}\ t=0,1,\dots.
  \end{equation}
  % So far, we have not yet used condition (\ref{cond 1}).
  From condition (\ref{cond 1}) and assumption (\ref{equation 1.1}), we can see that
  $$ \frac{\alpha(k)}{k\beta(k+t-1)} \to 0,\ k \to \infty,\ \text{for}\ t=0,1,\dots. $$
  From (\ref{3.2.1}), it follows that
  \begin{equation*}
    \begin{split}
      \abs{x_{k+t-1,k}}
      &= \frac{ \abs{\langle X z^{k},z^{k+t-1}\rangle} }{\beta(k+t-1)^2} \\
      &\leq \frac{ \norm{X} \norm{z^{k}}_\alpha \norm{z^{k+t-1}}_\beta }{\beta(k+t-1)^2} \\
      &= \frac{\alpha(k)}{\beta(k+t-1)} \norm{X}.
    \end{split}
  \end{equation*}
  Hence,
  \begin{equation}\label{3.2.4}
    \frac{\abs{x_{k+t-1,k}}}{k} \leq \frac{\alpha(k)}{k\beta(k+t-1)} \norm{X} \to 0,\ k \to \infty,\ \text{for}\ t=0,1,\dots.
  \end{equation}
  From (\ref{3.2.3}) and (\ref{3.2.4}), it can be observed that $\hat{h}(t)=0$ when $t=0,1,\dots$. This implies $h=0$ and consequently $Y=M_{\alpha,\beta,h}=0$.
\end{proof}

\begin{rem}\label{example 1 rem 1}
  Let $\lambda,\mu \in \R$. The weight sequences of the Hilbert spaces $\mathrm{H}_{(\lambda)} = \har{\alpha_{\lambda}}$ and $\mathrm{H}_{(\mu)} = \har{\alpha_{\mu}}$ are $\alpha_{\lambda}(k) = (k+1)^{\lambda}$ and $\alpha_{\mu}(k) = (k+1)^{\mu}$, respectively. Then the weight sequences $\alpha = \alpha_{\lambda}$ and $\beta = \alpha_{\mu}$ satisfy the condition (\ref{cond 1}) if and only if $\lambda - \mu < 1$.
\end{rem}

 The following proposition, which can be used to deal with two similar operators in $FB_2(\Omega)$, is Proposition 3.3 in \cite{JI2017}.

\begin{lem}\label{lem 3.1}
  If $X$ is an invertible operator intertwining two operators $T$ and $\tilde{T}$ in $FB_2(\Omega)$, i.e. $XT=\tilde{T}X$, then $X$ is upper triangular:
  $$ X=\begin{pmatrix}
    X_{11} & X_{12} \\
    0 & X_{22} \\ \end{pmatrix}, $$
  and the operators $X_{11}$ and $X_{22}$ are invertible.
\end{lem}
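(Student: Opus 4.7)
The plan is to work in the block form from the $FB_2(\Omega)$ decompositions
$$ T = \begin{pmatrix} T_0 & S \\ 0 & T_1 \end{pmatrix}, \qquad \tilde T = \begin{pmatrix} \tilde T_0 & \tilde S \\ 0 & \tilde T_1 \end{pmatrix}, $$
with $T_j, \tilde T_j \in B_1(\Omega)$ and $S, \tilde S$ non-zero. Writing $X = \begin{pmatrix} X_{11} & X_{12} \\ X_{21} & X_{22} \end{pmatrix}$ conformally and equating blocks of $X T = \tilde T X$, the $(2,1)$ entry yields $X_{21} T_0 = \tilde T_1 X_{21}$ and the $(2,2)$ entry yields $X_{21} S = \tilde T_1 X_{22} - X_{22} T_1$. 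Combining the former with $T_0 S = S T_1$ gives $\tilde T_1 (X_{21} S) = X_{21} T_0 S = X_{21} S T_1$, so $X_{21} S \in \Ker \sigma_{\tilde T_1, T_1}$; the latter simultaneously puts $X_{21} S$ in $\Ran \sigma_{\tilde T_1, T_1}$.

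I would then invoke the rigidity for $B_1(\Omega)$ operators (Lemma~2.18 and Theorem~2.19 of \cite{JI2017}, of which Proposition~\ref{prop 3.2} is the weighted-Hardy analogue): $\Ker \sigma_{\tilde T_1, T_1} \cap \Ran \sigma_{\tilde T_1, T_1} = \set{0}$. This forces $X_{21} S = 0$. To upgrade to $X_{21} = 0$, I would pass to holomorphic frames for the kernel bundles, picking sections $\gamma_0(\omega), \gamma_1(\omega)$ with $\Ker(T_j - \omega) = \C \gamma_j(\omega)$ for $\omega \in \Omega$. The intertwining $T_0 S = S T_1$ forces $S \gamma_1(\omega) = \phi(\omega) \gamma_0(\omega)$ for some holomorphic scalar $\phi$, and $\phi \not\equiv 0$ since $S \neq 0$ and the $\gamma_1(\omega)$ span. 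Then $0 = X_{21} S \gamma_1(\omega) = \phi(\omega) X_{21} \gamma_0(\omega)$ forces $X_{21} \gamma_0(\omega) = 0$ off the discrete zero set of $\phi$, hence everywhere on $\Omega$ by continuity; since the vectors $\gamma_0(\omega)$ span the ambient Hilbert space of $T_0$, we obtain $X_{21} = 0$.

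For the invertibility of $X_{11}$ and $X_{22}$, the same argument applied to $X^{-1}$ (which satisfies $X^{-1} \tilde T = T X^{-1}$) shows $X^{-1}$ is also upper triangular; the diagonal entries of $X X^{-1} = I = X^{-1} X$ then supply two-sided inverses of $X_{11}$ and $X_{22}$. I expect the main obstacle to be the rigidity step---the two-operator version $\Ker \sigma \cap \Ran \sigma = \set{0}$ for pairs of $B_1(\Omega)$ operators. Once it is in hand, the bundle-spreading argument is a standard Cowen--Douglas maneuver.
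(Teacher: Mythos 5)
The paper offers no proof of this lemma: it is quoted verbatim as Proposition 3.3 of \cite{JI2017}, so your attempt has to stand on its own. Your block computation is correct --- the $(2,1)$ and $(2,2)$ entries of $XT=\tilde TX$ do give $X_{21}T_0=\tilde T_1X_{21}$ and $X_{21}S=\sigma_{\tilde T_1,T_1}(X_{22})$, hence $X_{21}S\in\Ker\sigma_{\tilde T_1,T_1}\cap\Ran\sigma_{\tilde T_1,T_1}$ --- and your passage from $X_{21}S=0$ to $X_{21}=0$ via holomorphic frames is sound, as is the derivation of invertibility of the diagonal blocks from the same statement applied to $X^{-1}$. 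The gap is exactly where you feared. The two-operator rigidity $\Ker\sigma_{A,B}\cap\Ran\sigma_{A,B}=\set{0}$ is \emph{false} for general $A,B\in B_1(\Omega)$, and it is not what Lemma 2.18 and Theorem 2.19 of \cite{JI2017} assert: the paper is explicit that those results give only the single-operator case $A=B$, and that the two-operator version requires a genuine generalization under an extra hypothesis --- that is the entire point of Proposition \ref{prop 3.2} and its condition (\ref{cond 1}). Concretely, take $A=M_{\alpha,z}^*$ on $\har{\alpha}$ with $\alpha(k)=k+1$ and $B=M_{\beta,z}^*$ on $\har{\beta}=\mathrm{H}^2(\D)$ (the case $\lambda-\mu=1$ excluded by Remark \ref{example 1 rem 1}). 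Both are in $B_1(\D)$. The inclusion $\iota\colon\har{\alpha}\to\har{\beta}$ and differentiation $D\colon\har{\alpha}\to\har{\beta}$ are bounded, satisfy $\iota M_{\alpha,z}=M_{\beta,z}\iota$ and $DM_{\alpha,z}-M_{\beta,z}D=\iota$, and taking adjoints yields $0\ne\iota^*\in\Ker\sigma_{A,B}\cap\Ran\sigma_{A,B}$. Since the lemma is stated for arbitrary $T,\tilde T\in FB_2(\Omega)$, where $\tilde T_1$ and $T_1$ may be any two $B_1(\Omega)$ operators, the step $X_{21}S=0$ is not justified by what you cite.

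Two further remarks. First, in the only place the paper invokes this lemma (Proposition \ref{prop 3.4}) one has $\tilde T_1=T_1=M_{\beta,z}^*$, so the single-operator rigidity (equivalently, Proposition \ref{prop 3.2} with $\alpha=\beta$, where condition (\ref{cond 1}) holds automatically) does apply and your argument closes in that special case; your proof is therefore salvageable for the paper's purposes but does not establish the lemma as stated. Second, to get the general statement one must exploit more than the $(2,1)$--$(2,2)$ subsystem --- the invertibility of $X$ and the remaining block equations carry essential information --- which is presumably how the argument in \cite{JI2017} proceeds. As written, your proof proves the conclusion only under an unstated additional hypothesis on the pair $(\tilde T_1,T_1)$.
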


\begin{prop}\label{prop 3.4}
  Let $\har{\alpha}$ and $\har{\beta}$ be two weighted Hardy spaces, where the weight sequences $\alpha$ and $\beta$ satisfy condition (\ref{cond 1}). If the operators $T_{\alpha,\beta,h}$ and $T_{\alpha,\beta,\tilde{h}}$ are similar, where $h$ and $\tilde{h}$ are two non-zero functions in $\mult(\har{\alpha},\har{\beta})$, then there exist $h_1 \in \mult(\har{\alpha})$ and $h_2 \in \mult(\har{\beta})$ such that $\frac{1}{h_1} \in \mult(\har{\alpha})$, $\frac{1}{h_2} \in \mult(\har{\beta})$, and $h = \tilde{h} h_1 h_2$.
\end{prop}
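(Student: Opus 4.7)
Since both $h$ and $\tilde h$ are non-zero, the operators $T_{\alpha,\beta,h}$ and $T_{\alpha,\beta,\tilde h}$ both belong to $FB_2(\D)$, so Lemma~\ref{lem 3.1} is directly applicable. My strategy has three stages: (1) use that lemma to force any invertible intertwiner $X$ with $T_{\alpha,\beta,h}X = X T_{\alpha,\beta,\tilde h}$ to be upper triangular with invertible diagonal entries; (2) recognise the adjoints of the diagonal blocks as multiplication operators by invertible multipliers on $\har{\alpha}$ and $\har{\beta}$; (3) reduce the off-diagonal intertwining relation to a single scalar identity by invoking Proposition~\ref{prop 3.2}.

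Writing $X=\begin{pmatrix}X_{11} & X_{12}\\ 0 & X_{22}\end{pmatrix}$ and computing both sides of $T_{\alpha,\beta,h}X=XT_{\alpha,\beta,\tilde h}$ block by block produces the diagonal equations $M_{\alpha,z}^*X_{11}=X_{11}M_{\alpha,z}^*$ and $M_{\beta,z}^*X_{22}=X_{22}M_{\beta,z}^*$ together with the off-diagonal equation
\[
M_{\alpha,z}^*X_{12} + M_{\alpha,\beta,h}^*X_{22}
\;=\; X_{11} M_{\alpha,\beta,\tilde h}^* + X_{12} M_{\beta,z}^*.
\]
Taking adjoints in the diagonal equations shows $X_{11}^*\in\{M_{\alpha,z}\}'$ and $X_{22}^*\in\{M_{\beta,z}\}'$; the characterisation of these commutants recorded in the preliminaries then gives $X_{11}^*=M_{\alpha,h_1}$ and $X_{22}^*=M_{\beta,h_2'}$ for some $h_1\in\mult(\har{\alpha})$ and $h_2'\in\mult(\har{\beta})$. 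Invertibility of $X_{11}$ and $X_{22}$ combined with the preliminary fact that $M_{\alpha,g}$ is invertible iff $1/g\in\mult(\har{\alpha})$ yields $1/h_1\in\mult(\har{\alpha})$ and $1/h_2'\in\mult(\har{\beta})$.

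The decisive step is to take adjoints of the off-diagonal equation and simplify using the product rules $M_{\alpha,\beta,f}M_{\alpha,g}=M_{\alpha,\beta,fg}$ and $M_{\beta,g}M_{\alpha,\beta,f}=M_{\alpha,\beta,gf}$, which rearranges the equation into
\[
M_{\alpha,\beta,\,\tilde h h_1 - h_2' h} \;=\; X_{12}^* M_{\alpha,z} - M_{\beta,z} X_{12}^*.
\]
The left-hand side is itself a multiplication operator from $\har{\alpha}$ to $\har{\beta}$ and hence intertwines $M_{\alpha,z}$ with $M_{\beta,z}$, while the right-hand side is exactly of the form $X M_{\alpha,z}-M_{\beta,z}X$ for $X=X_{12}^*\in\mathcal{L}(\har{\alpha},\har{\beta})$. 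This is precisely the configuration treated in the proof of Proposition~\ref{prop 3.2}, so condition (\ref{cond 1}) forces $\tilde h h_1 - h_2' h = 0$, i.e.\ $h=\tilde h h_1 (1/h_2')$. Setting $h_2:=1/h_2'$ then gives $h=\tilde h h_1 h_2$ with $h_1,1/h_1\in\mult(\har{\alpha})$ and $h_2,1/h_2\in\mult(\har{\beta})$, as required.

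The only real obstacle is the bookkeeping of adjoints and of the direction of multiplication operators (between $\har{\alpha}$ and $\har{\beta}$ versus the reverse); all the analytic content has been packaged into Proposition~\ref{prop 3.2}, which is why condition (\ref{cond 1}) is precisely the hypothesis needed.
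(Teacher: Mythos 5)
Your proposal is correct and follows essentially the same route as the paper: apply Lemma~\ref{lem 3.1} to get an upper-triangular intertwiner with invertible diagonal blocks, identify those blocks (after taking adjoints) as invertible multiplication operators, and use Proposition~\ref{prop 3.2} to kill the off-diagonal discrepancy $\tilde h h_1 - h_2' h$. The only cosmetic difference is that the paper writes the intertwining as $XT=\tilde TX$ and works with the adjoint operators throughout, obtaining $hg_1=g_2\tilde h$ and then setting $h_1=1/g_1$, $h_2=g_2$, whereas you invert the second factor instead; the content is identical.
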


\begin{proof}
  Since $T := T_{\alpha,\beta,h} \sim \tilde{T} := T_{\alpha,\beta,\tilde{h}}$, there exists an invertible operator $X$ such that $XT=\tilde{T}X$. According to Lemma \ref{lem 3.1} and $T$ and $\tilde{T}$ are in $FB_2(\D)$, it follows that $X$ is upper triangular:
  $$ X=\begin{pmatrix}
    X_{11} & X_{12} \\
    0 & X_{22} \\ \end{pmatrix}, $$
  and the operators $X_{11}$ and $X_{22}$ are invertible.

  From $XT=\tilde{T}X$, we have
  $$ X_{11} M_{\alpha,z}^* = M_{\alpha,z}^* X_{11}, $$
  $$ X_{22} M_{\beta,z}^* = M_{\beta,z}^* X_{22}, $$
  $$ X_{11} M_{\alpha,\beta,h}^* + X_{12} M_{\beta,z}^* = M_{\alpha,z}^* X_{12} + M_{\alpha,\beta,\tilde{h}}^* X_{22}. $$
  The first two equations show that $X_{11}=M_{\alpha,g_1}^*$ and $X_{22}=M_{\beta,g_2}^*$ for some $g_1 \in \mult(\har{\alpha})$ and $g_2 \in \mult(\har{\beta})$. The third equation shows that
  $$ X_{11} M_{\alpha,\beta,h}^* - M_{\alpha,\beta,\tilde{h}}^* X_{22} = \sigma_{M_{\alpha,z}^*,M_{\beta,z}^*} (X_{12}). $$

  Since $M_{\alpha,g_1}^*=X_{11}$ and $M_{\beta,g_2}^*=X_{22}$ are invertible, we have $\frac{1}{g_1} \in \mult(\har{\alpha})$ and $\frac{1}{g_2} \in \mult(\har{\beta})$. What's more, we can directly calculate
  $$ \sigma_{M_{\alpha,z}^*,M_{\beta,z}^*} (X_{11} M_{\alpha,\beta,h}^* - M_{\alpha,\beta,\tilde{h}}^* X_{22}) = 0. $$

  Therefore, $X_{11} M_{\alpha,\beta,h}^* - M_{\alpha,\beta,\tilde{h}}^* X_{22}$ is in $\Ran\sigma_{M_{\alpha,z}^*,M_{\beta,z}^*} \cap \Ker\sigma_{M_{\alpha,z}^*,M_{\beta,z}^*}$. According to Proposition \ref{prop 3.2}, this implies $X_{11} M_{\alpha,\beta,h}^* = M_{\alpha,\beta,\tilde{h}}^* X_{22}$ and consequently $M_{\alpha,\beta,h} M_{\alpha,g_1} = M_{\beta,g_2} M_{\alpha,\beta,\tilde{h}}$. Thus, $h g_1 = g_2 \tilde{h}$.

  Finally, take $h_1 = \frac{1}{g_1}$ and $h_2 = g_2$, and then we complete the proof.
\end{proof}

\begin{rem}\label{prop 3.4 rem}
  Let $\har{\alpha}$ and $\har{\beta}$ be two weighted Hardy spaces. Let $h$ and $\tilde{h}$ be in $\mult(\har{\alpha},\har{\beta})$. If there exist $h_1 \in \mult(\har{\alpha})$ and $h_2 \in \mult(\har{\beta})$ such that $\frac{1}{h_1} \in \mult(\har{\alpha})$, $\frac{1}{h_2} \in \mult(\har{\beta})$, and $h = \tilde{h} h_1 h_2$, then $T_{\alpha,\beta,h}$ and $T_{\alpha,\beta,\tilde{h}}$ are similar.
\end{rem}

\begin{proof}
  To see this, take the invertible operator
  $$ X=\begin{pmatrix}
    (M_{\alpha,h_1}^{-1})^* & 0 \\
    0 & M_{\beta,h_2}^* \\ \end{pmatrix}, $$
%  then
%  \begin{equation*}
%    \begin{split}
%      X T X^{-1}
%      &= \begin{pmatrix}
%        M_{\alpha,\frac{1}{h_1}}^* & 0 \\
%        0 & M_{\beta,h_2}^* \\ \end{pmatrix}
%      \begin{pmatrix}
%        M_{\alpha,z}^* & M_{\alpha,\beta,h}^* \\
%        0 & M_{\beta,z}^* \\
%      \end{pmatrix}
%      \begin{pmatrix}
%        M_{\alpha,h_1}^* & 0 \\
%        0 & M_{\beta,\frac{1}{h_2}}^* \\
%      \end{pmatrix}
%      \\
%%      &= \begin{pmatrix}
%%        M_{\alpha,\frac{1}{h_1}}^* M_{\alpha,z}^* M_{\alpha,h_1}^* & M_{\alpha,\frac{1}{h_1}}^* M_{\alpha,\beta,h}^* M_{\beta,\frac{1}{h_2}}^* \\
%%        0 & M_{\beta,h_2}^* M_{\beta,z}^* M_{\beta,\frac{1}{h_2}}^* \\ \end{pmatrix}
%%       \\
%      &= \begin{pmatrix}
%        ( M_{\alpha,h_1} M_{\alpha,z} M_{\alpha,\frac{1}{h_1}} )^* & ( M_{\beta,\frac{1}{h_2}} M_{\alpha,\beta,h} M_{\alpha,\frac{1}{h_1}} )^* \\
%        0 & ( M_{\beta,\frac{1}{h_2}} M_{\beta,z} M_{\beta,h_2} )^* \\ \end{pmatrix}
%      = \tilde{T},
%    \end{split}
%  \end{equation*}
  then, by simple calculation, $ X T_{\alpha,\beta,h} X^{-1} = T_{\alpha,\beta,\tilde{h}} $,
  and this implies that $T_{\alpha,\beta,h}$ and $T_{\alpha,\beta,\tilde{h}}$ are similar.
\end{proof}

\section{The case of $B(z) = e^{i \theta} \frac{z-a}{1-\bar{a}z}$}\label{step 2}

  Recall a basic concept of weakly homogeneous operators, which was introduced by Clark and Misra \cite{Clark1993}. Denote by $\aut(\D)$ the analytic automorphism group of $\D$, which is the set of all analytic bijection from $\D$ to itself. As well know, a function $\phi$ is in $\aut(\D)$ if and only if it has the following form:
  $ \phi(z) = e^{i \theta} \frac{z-a}{1-\bar{a}z},\ z \in \D, $
  for some $\theta \in [0,2\pi)$ and some $a \in \D$. A bounded linear operator $T$ on a complex separable Hilbert space $H$ is called weakly homogeneous if $\sigma(T) \subseteq \DD$ and for any $\phi \in \aut(\D)$, $\phi(T)$ is similar to $T$.

  We say a weighted Hardy space $\har{\alpha}$ be \textbf{M$\ddot{o}$bius invariant} if for each $\phi \in \aut(\D)$, $f\circ\phi \in \har{\alpha}$ whenever $f \in \har{\alpha}$. Let $\har{\alpha}$ be a M$\ddot{o}$bius invariant weighted Hardy space. Then for each $\phi \in \aut(\D)$, one can define a composition operator $C_{\alpha,\phi} \colon f \in \har{\alpha} \to f \circ \phi \in \har{\alpha}$, which is bounded by closed graph Theorem. What's more, $C_{\alpha,\phi}$ is invertible and $C_{\alpha,\phi}^{-1}=C_{\alpha,\phi^{-1}}$. By simple calculation, $\phi(M_{\alpha,z}) C_{\alpha,\phi} = M_{\alpha,\phi} C_{\alpha,\phi} = C_{\alpha,\phi} M_{\alpha,z}$. Thus, $M_{\alpha,z}$ is a weakly homogeneous operator.

\begin{rem}\label{example 1 rem 2}
  It is known that for each $\lambda \in \R$, the Hilbert space $\mathrm{H}_{(\lambda)}$ is M$\ddot{o}$bius invariant (see \cite{Zorboska1990}, \cite{COWEN1995}). % (see Lemma 3.9 in \cite{Ghara2018}).
\end{rem}

% \begin{Hou}
\begin{rem}\label{example 2 rem 1}
  Theorem 3.1 in \cite{hou2023} shows that each weighted Hardy space of polynomial growth is M$\ddot{o}$bius invariant. % Theorem 3.1 !!
\end{rem}
% \end{Hou}

  The main idea of the next proposition originates from Theorem 3.16 in \cite{Ghara2018}. Here we use Proposition \ref{prop 3.4} to improve it in part.

\begin{prop}\label{prop 3.5}
  Let $\har{\alpha}$ and $\har{\beta}$ be two M$\ddot{o}$bius invariant weighted Hardy spaces. Suppose the weight sequences $\alpha$ and $\beta$ satisfy condition (\ref{cond 1}). If the operator $T_{\alpha,\beta,h} $ is weakly homogeneous, where $h \in \mathcal{C}(\DD) \cap \mult(\har{\alpha},\har{\beta})$, then $h$ is zero everywhere on $\DD$ or $h$ is non-zero everywhere on $\DD$.
\end{prop}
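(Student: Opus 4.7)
The plan is, assuming $h \not\equiv 0$, to deduce that $h$ is non-vanishing on $\DD$. The strategy is: for each $\phi \in \aut(\D)$, compute $\phi(T_{\alpha,\beta,h})$ explicitly, use M\"obius invariance to reduce via composition operators to a similarity $T_{\alpha,\beta,h} \sim T_{\alpha,\beta,g_\phi}$ for an explicit $g_\phi$ depending on $h$ and $\phi$, apply Proposition \ref{prop 3.4} to extract a functional identity relating $h$ and $g_\phi$, and finally use transitivity of $\aut(\D)$ together with the continuity of $h$ on $\DD$ to propagate any zero of $h$ throughout $\DD$.

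\textbf{Computing $\phi(T)$.} Let $T = T_{\alpha,\beta,h}$ and $\tilde\phi(z) := \overline{\phi(\bar z)}$, which again lies in $\aut(\D)$. Since $\phi(T)^* = \tilde\phi(T^*)$ and $T^*$ is block lower-triangular, an induction on $(T^*)^n$ (whose off-diagonal equals $n\,M_{\alpha,\beta,z^{n-1}h}$) extends by the holomorphic functional calculus to
$$ \tilde\phi(T^*) = \begin{pmatrix} M_{\alpha,\tilde\phi} & 0 \\ M_{\alpha,\beta,h\tilde\phi'} & M_{\beta,\tilde\phi} \end{pmatrix}. $$
M\"obius invariance supplies bounded invertible composition operators with $M_{\alpha,\tilde\phi} = C_{\alpha,\tilde\phi} M_{\alpha,z} C_{\alpha,\tilde\phi}^{-1}$, and analogously on $\har{\beta}$. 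Conjugating $\phi(T) = (\tilde\phi(T^*))^*$ by $\mathrm{diag}(C_{\alpha,\tilde\phi}^*, C_{\beta,\tilde\phi}^*)$ restores the diagonals to $M_{\alpha,z}^*$ and $M_{\beta,z}^*$. Setting $\psi := \tilde\phi^{-1}$ and using $\tilde\phi'(\psi(w))\,\psi'(w) = 1$, a direct calculation identifies the resulting off-diagonal multiplier as $g_\phi(w) = h(\psi(w))/\psi'(w)$, yielding $\phi(T) \sim T_{\alpha,\beta,g_\phi}$.

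\textbf{Propagating zeros.} Weak homogeneity then gives $T_{\alpha,\beta,h} \sim T_{\alpha,\beta,g_\phi}$; since $h \not\equiv 0$ forces $g_\phi \not\equiv 0$, Proposition \ref{prop 3.4} produces invertible multipliers $h_1 \in \mult(\har{\alpha})$ and $h_2 \in \mult(\har{\beta})$ with
$$ h(w)\,\psi'(w) = h(\psi(w))\,h_1(w) h_2(w) \qquad \text{on } \D. $$
Since $h_i, 1/h_i \in \mathrm{H}^{\infty}(\D)$ and $\psi' \in \hol(\DD)$ is bounded and non-vanishing on $\DD$, this yields two-sided estimates $c(\phi)\,|h(\psi(w))| \leq |h(w)| \leq C(\phi)\,|h(\psi(w))|$ on $\D$ with $c(\phi), C(\phi) > 0$. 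An interior zero $h(w_0) = 0$, $w_0 \in \D$, then forces $h(\psi(w_0)) = 0$; as $\phi$ varies over $\aut(\D)$ so does $\psi = \tilde\phi^{-1}$, and transitivity of $\aut(\D)$ on $\D$ forces $h \equiv 0$ on $\D$, hence on $\DD$ by continuity, contradicting $h \not\equiv 0$. For a possible zero at $w_0 \in \partial\D$, the estimates extend to $\DD$ via continuity of $h$ and the homeomorphic extension of $\psi$ to $\DD$; transitivity of $\aut(\D)$ on $\partial\D$ gives $h \equiv 0$ on $\partial\D$, and the maximum modulus principle (applicable to $h \in \hol(\D) \cap \mathcal{C}(\DD)$) forces $h \equiv 0$ on $\DD$. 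I expect the main technical hurdle to be the functional-calculus identification $\phi(T) \sim T_{\alpha,\beta,g_\phi}$: the factor $\tilde\phi'$ on the off-diagonal multiplier is precisely the M\"obius covariance that enables the reduction to Proposition \ref{prop 3.4}, and without it the argument cannot proceed.
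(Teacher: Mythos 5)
Your proposal is correct and follows essentially the same route as the paper: compute $\phi(T)$ via the functional calculus, conjugate by the composition operators to obtain $T_{\alpha,\beta,h}\sim T_{\alpha,\beta,(h\circ\psi)/\psi'}$ (the paper phrases this with $\tilde\phi(T)$ and $\phi^{-1}$, which is the same family of identities since $\phi\mapsto\tilde\phi^{-1}$ permutes $\aut(\D)$), and then invoke Proposition \ref{prop 3.4} to get the functional equation for $h$. The only difference is that the paper delegates the final zero-propagation step to Theorem 3.16 of \cite{Ghara2018}, whereas you carry it out explicitly (two-sided estimates from $h_i^{\pm1}\in\mathrm{H}^\infty(\D)$, transitivity of $\aut(\D)$ on $\D$ and on $\partial\D$, and the maximum modulus principle), and that argument is sound.
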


\begin{proof}
  If $h$ is a zero function on $\D$, then $h$ is a zero function on $\DD$ since $h \in \mathcal{C}(\DD)$. So we suppose that $h$ is a non-zero function on $\D$. % and then show that $h$ is non-zero everywhere on $\DD$.

  Let $\phi \in \aut(\D)$ and let $\tilde{\phi}$ be the function defined by $\tilde{\phi}(z)=\overline{\phi(\overline{z})}$. Then $\tilde{\phi}$ is also in $\aut(\D)$. Since $T := T_{\alpha,\beta,h}$ is weakly homogeneous, $\tilde{\phi}(T) \sim T$. From $M_{\alpha,z}^* M_{\alpha,\beta,h}^* = M_{\alpha,\beta,h}^* M_{\beta,z}^*$, it can be calculated that
  \begin{equation*}
      \tilde{\phi}(T)
%      = \tilde{\phi} \begin{pmatrix}
%        M_{\alpha,z}^* & M_{\alpha,\beta,h}^* \\
%        0 & M_{\beta,z}^* \\ \end{pmatrix}
      = \begin{pmatrix}
        \tilde{\phi}(M_{\alpha,z}^*) & \tilde{\phi}'(M_{\alpha,z}^*) M_{\alpha,\beta,h}^* \\
        0 & \tilde{\phi}(M_{\beta,z}^*) \\ \end{pmatrix}\\
      = \begin{pmatrix}
        M_{\alpha,\phi}^* & M_{\alpha,\beta,h \phi'}^* \\
        0 & M_{\beta,\phi}^* \\ \end{pmatrix}.
  \end{equation*}
  M$\ddot{o}$bius invariance guarantees that the composition operators $C_{\alpha,\phi}$ and $C_{\beta,\phi}$ are invertible and that $C_{\alpha,\phi}^{-1}=C_{\alpha,\phi^{-1}}$ and $C_{\beta,\phi}^{-1}=C_{\beta,\phi^{-1}}$. Then, by simple calculation,
%  $$ C_{\alpha,\phi^{-1}} M_{\alpha,\phi} C_{\alpha,\phi} = M_{\alpha,z}, $$
%  $$ C_{\beta,\phi^{-1}} M_{\beta,\phi} C_{\beta,\phi} = M_{\beta,z}, $$
%  $$ C_{\beta,\phi^{-1}} M_{\alpha,\beta,h \phi'} C_{\alpha,\phi} = M_{\alpha,\beta,(h\circ\phi^{-1})(\phi'\circ\phi^{-1})}, $$
  \begin{equation*}
    \begin{pmatrix}
      C_{\alpha,\phi}^* & 0 \\
      0 & C_{\beta,\phi}^* \\
    \end{pmatrix}
    \tilde{\phi}(T) \begin{pmatrix}
      C_{\alpha,\phi}^* & 0 \\
      0 & C_{\beta,\phi}^* \\ \end{pmatrix}^{-1}\\
    = \begin{pmatrix}
      M_{\alpha,z}^* & M_{\alpha,\beta,(h\circ\phi^{-1})(\phi'\circ\phi^{-1})}^* \\
      0 & M_{\beta,z}^* \\ \end{pmatrix}.
  \end{equation*}
  So
  $$ T = T_{\alpha,\beta,h} \sim \tilde{\phi}(T) \sim T_{\alpha,\beta,(h\circ\phi^{-1})(\phi'\circ\phi^{-1})}. $$

  Since $\phi$ is a bijection from $\D$ to itself and $\phi'(z) = e^{i \theta} \frac{1-\abs{a}^2}{(1-\bar{a}z)^2}$ is non-zero everywhere on $\D$, then $(h\circ\phi^{-1})(\phi'\circ\phi^{-1})$ is a non-zero function on $\D$. Consequently, according to Proposition \ref{prop 3.4}, there exists $h_1 = h_{1,\phi} \in \mult(\har{\alpha})$ and $h_2 = h_{2,\phi} \in \mult(\har{\beta})$ such that $\frac{1}{h_1} \in \mult(\har{\alpha})$, $\frac{1}{h_2} \in \mult(\har{\beta})$, and $ h = (h\circ\phi^{-1})(\phi'\circ\phi^{-1}) h_1 h_2 $. Thus,
  \begin{equation}\label{3.5.1}
    h(\phi(z)) = h(z) \phi'(z) h_1(\phi(z)) h_2(\phi(z)),\ z \in \D.
  \end{equation}

  Then, similar to the last part of the proof of Theorem 3.16 in \cite{Ghara2018}, we can obtain that $h$ is non-zero everywhere on $\DD$.
\end{proof}
%\begin{detail}
%  Let $z_0 \in \D$ such that $h(z_0) \neq 0$. Then, by (\ref{3.5.1}), $h(\phi (z_0)) \neq 0$. Since $\phi$ is an arbitrary function in $\aut(\D)$, it follows that $h$ is non-zero everywhere on $\D$.
%
%  Assume there exists $e^{i \theta_0} \in \mathbb{T}$ such that $h(e^{i \theta_0}) = 0$. Let $\theta \in [0,2\pi)$ and $\phi(z) = e^{i \theta} z$, $z \in \D$. Take a sequence $(z_k)_{k=1}^{\infty}$ in $\D$ such that $z_k \to e^{i \theta_0}$.
%  Then $h(z_k) \to 0$ and $h(\phi(z_k)) \to h(e^{i(\theta+\theta_0)})$.
%%  Then $h(z_k) \to h(e^{i \theta_0}) =0$, $\phi(z_k) \to \phi(e^{i \theta_0}) = e^{i(\theta+\theta_0)}$ and $h(\phi(z_k)) \to h(e^{i(\theta+\theta_0)})$.
%  Note that $\phi'$ is bounded on $\D$. Also, $h_1$ and $h_2$ are bounded on $\D$, since $\mult(\har{\alpha})$ and $\mult(\har{\beta})$ are both contained in $\mathrm{H}^{\infty}(\D)$. Then, by (\ref{3.5.1}), $h(\phi(z_k)) \to 0$. Thus, $h(e^{i(\theta+\theta_0)}) = 0$. Since $\theta \in [0,2\pi)$ is arbitrary, it follows that $h$ is zero everywhere on $\mathbb{T}$. Hence $h$ is zero everywhere on $\D$, contradiction with that $h$ is a non-zero function on $\D$. Therefore, $h$ is non-zero everywhere on $\mathbb{T}$.
%\end{detail}

\begin{rem}\label{prop 3.5 rem}
  Let $\har{\alpha}$ and $\har{\beta}$ be two M$\ddot{o}$bius invariant weighted Hardy spaces. Let $h$ be in $\hol(\DD) \cap \mult(\har{\alpha},\har{\beta})$. If $h$ is zero everywhere on $\DD$ or $h$ is non-zero everywhere on $\DD$, then $T = T_{\alpha,\beta,h}$ is weakly homogeneous.
\end{rem}

\begin{proof}
  It is trivial if $h$ is zero everywhere on $\DD$. So we suppose that $h$ is non-zero everywhere on $\DD$. Then $\frac{1}{h} \in \hol(\DD)$. Let $\phi \in \aut(\D)$ and let $\tilde{\phi}$ be the function defined by $\tilde{\phi}(z)=\overline{\phi(\overline{z})}$.
  % Then $\tilde{\phi}$ is also in $\aut(\D)$.
  From the proof of Proposition \ref{prop 3.5}, we see that $ \tilde{\phi}(T) \sim T_{\alpha,\beta,(h\circ\phi^{-1})(\phi'\circ\phi^{-1})} $. It is not hard to see that the function $\tilde{h} := (h\circ\phi^{-1})(\phi'\circ\phi^{-1})$ is in $\hol(\DD)$ and non-zero everywhere on $\DD$. Then $\frac{1}{\tilde{h}} \in \hol(\DD)$. Since $h = \tilde{h} \frac{1}{\tilde{h}} h$ and $\hol(\DD)$ is contained in both $\mult(\har{\alpha})$ and $\mult(\har{\beta})$, then according to Remark \ref{prop 3.4 rem}, $ T = T_{\alpha,\beta,h} \sim T_{\alpha,\beta,(h\circ\phi^{-1})(\phi'\circ\phi^{-1})} $. Thus, $\tilde{\phi}(T) \sim T$. Since $\tilde{\phi}$ is arbitrary in $\aut(\D)$, we obtain that $T$ is weakly homogeneous.
\end{proof}

\section{The case of $B(z) = z^n$}\label{step 3}

\subsection{Power of a special operator}

  Let $\har{\alpha}$ be a weighted Hardy space and $n$ be a positive integer. Denote
  $$ \har{\alpha,j} := \vee \set{z^{j+kn}}_{k=0}^{\infty} \subseteq \har{\alpha},\ j=0,1,\dots,n-1. $$
  Then, it can be verified that
  $$ \har{\alpha} = \har{\alpha,0} \dotplus \har{\alpha,1} \dotplus \cdots \dotplus \har{\alpha,n-1} $$
  is a Hilbert direct sum, and $\set{z^{j+kn}}_{k=0}^{\infty}$ is an orthogonal basis of $\har{\alpha,j}$. What's more, $\har{\alpha,j}$ is an invariant subspace of the multiplication operator $M_{\alpha,z^n} = M_{\alpha,z}^n$.
%  Hence,
%  $$ M_{\alpha,z^n} = M_{\alpha,z^n}|_{\har{\alpha,0}} \dotplus M_{\alpha,z^n}|_{\har{\alpha,1}} \dotplus \cdots \dotplus M_{\alpha,z^n}|_{\har{\alpha,n-1}}. $$

  We say that a weighted Hardy space $\har{\alpha}$ has \textbf{property A} for a positive integer $n$, if there exists $c_{1}=c_{1}(n)>0$ and $c_{2}=c_{2}(n)>0$ such that
  $$ \alpha(k) \leq c_1 \alpha(j+kn) $$
  and
  $$ \alpha(j+kn) \leq c_2 \alpha(k) $$
  for any $j=0,1,\dots,n-1$ and $k=0,1,\dots$.

  Inspired by \cite{Ahmadi2012}, we give the following lemma.

\begin{lem}\label{lem 3.2}
  Let $\har{\alpha}$ be a weighted Hardy space with property A for a positive integer $n$. Then for each $j=0,1,\dots,n-1$, the map
  $$ X_{\alpha,j} \colon f = \sum_{k=0}^{\infty} \hat{f}(k) z^k \in \har{\alpha} \to \sum_{k=0}^{\infty} \hat{f}(k) z^{j+kn} \in \har{\alpha,j} $$
  is an invertible bounded linear operator. Moreover, for each $j=0,1,\dots,n-1$, $M_{\alpha,z^n}|_{\har{\alpha,j}}$ $X_{\alpha,j}$ $=$ $X_{\alpha,j}$ $M_{\alpha,z}$. Thus, $M_{\alpha,z^n}$ is similar to $\oplus_{1}^{n} M_{\alpha,z}$.
\end{lem}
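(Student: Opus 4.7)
The plan is to verify the three claims in sequence: first that $X_{\alpha,j}$ is bounded with bounded inverse (both estimates coming from the two inequalities of property A), then that it intertwines $M_{\alpha,z}$ with $M_{\alpha,z^n}|_{\har{\alpha,j}}$, and finally to assemble the $X_{\alpha,j}$ into a block operator implementing the desired similarity via the orthogonal decomposition $\har{\alpha} = \har{\alpha,0} \dotplus \cdots \dotplus \har{\alpha,n-1}$ already recorded in the paper.

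First I would establish the norm estimates. For $f = \sum_{k=0}^{\infty} \hat f(k) z^k \in \har{\alpha}$, the orthogonality of $\set{z^{j+kn}}_{k=0}^{\infty}$ in $\har{\alpha,j}$ with $\norm{z^{j+kn}}_\alpha = \alpha(j+kn)$ gives
$$ \norm{X_{\alpha,j} f}_\alpha^2 = \sum_{k=0}^{\infty} \abs{\hat f(k)}^2 \alpha(j+kn)^2 \leq c_2^2 \sum_{k=0}^{\infty} \abs{\hat f(k)}^2 \alpha(k)^2 = c_2^2 \norm{f}_\alpha^2, $$
where the inequality uses the second half of property A. The same computation, read with the first half of property A, yields $\norm{f}_\alpha \leq c_1 \norm{X_{\alpha,j} f}_\alpha$, so $X_{\alpha,j}$ is bounded below. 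For surjectivity onto $\har{\alpha,j}$, any $g \in \har{\alpha,j}$ expands as $g = \sum_{k=0}^{\infty} \hat g(j+kn) z^{j+kn}$ with $\sum_k \abs{\hat g(j+kn)}^2 \alpha(j+kn)^2 < \infty$; setting $a(k) = \hat g(j+kn)$, property A gives $\sum_k \abs{a(k)}^2 \alpha(k)^2 < \infty$, so the completeness criterion stated in the preliminaries guarantees that the formal series $\tilde f = \sum_{k=0}^{\infty} a(k) z^k$ is a genuine element of $\har{\alpha}$ with $X_{\alpha,j} \tilde f = g$. Hence $X_{\alpha,j}$ is a bounded invertible operator from $\har{\alpha}$ onto $\har{\alpha,j}$, with $\norm{X_{\alpha,j}} \leq c_2$ and $\norm{X_{\alpha,j}^{-1}} \leq c_1$.

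Next I would verify the intertwining identity by testing both sides on the orthogonal basis $\set{z^k}_{k=0}^{\infty}$: directly, $X_{\alpha,j} M_{\alpha,z} z^k = X_{\alpha,j} z^{k+1} = z^{j+(k+1)n}$, while $M_{\alpha,z^n}|_{\har{\alpha,j}} X_{\alpha,j} z^k = M_{\alpha,z^n} z^{j+kn} = z^{j+(k+1)n}$. Agreement on a basis together with boundedness of both sides forces the two operators to coincide on all of $\har{\alpha}$.

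For the final assertion, since $\har{\alpha} = \har{\alpha,0} \dotplus \cdots \dotplus \har{\alpha,n-1}$ is a Hilbert direct sum and each $\har{\alpha,j}$ is $M_{\alpha,z^n}$-invariant, we have $M_{\alpha,z^n} = \bigoplus_{j=0}^{n-1} M_{\alpha,z^n}|_{\har{\alpha,j}}$. Assembling the $X_{\alpha,j}$ into $X := \bigoplus_{j=0}^{n-1} X_{\alpha,j} \colon \oplus_{1}^{n} \har{\alpha} \to \har{\alpha}$, the uniform bounds on the norms of the $X_{\alpha,j}$ and their inverses make $X$ bounded and invertible, and componentwise the intertwining relation reads $X (\oplus_{1}^{n} M_{\alpha,z}) = M_{\alpha,z^n} X$, yielding the similarity. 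The only delicate point in the whole argument is keeping track of which direction of property A is needed for which estimate, and checking that the preimage produced in the surjectivity step actually lies in $\har{\alpha}$; there is no conceptual obstacle beyond that bookkeeping.
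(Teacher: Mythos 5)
Your proof is correct and follows essentially the same route as the paper's: the two halves of property A give the upper bound and the lower bound/surjectivity respectively, exactly as in the paper's computation. The intertwining check on basis vectors and the assembly of the $X_{\alpha,j}$ into a block-diagonal similarity are the details the paper dismisses as "easily checked," and you have filled them in correctly.
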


\begin{proof}
  Let $f = \sum_{k=0}^{\infty} \hat{f}(k) z^k \in \har{\alpha}$, then
  \begin{equation*}
    \begin{split}
      \norm{\sum_{k=0}^{\infty} \hat{f}(k) z^{j+kn}}^2 &= \sum_{k=0}^{\infty} \abs{\hat{f}(k)}^2 \alpha(j+kn)^2 \\
        &\leq c_2^2 \sum_{k=0}^{\infty} \abs{\hat{f}(k)}^2 \alpha(k)^2
        = c_2^2 \norm{f}^2, % < \infty,
    \end{split}
  \end{equation*}
  which implies $g = \sum_{k=0}^{\infty} \hat{f}(k) z^{j+kn} \in \har{\alpha,j}$ and $\norm{g} \leq c_2 \norm{f}$. Thus, it can be seen that $X_{\alpha,j}$ is not only well defined but also a bounded linear operator. Obviously, $X_{\alpha,j}$ is an injection.

  Let $g = \sum_{k=0}^{\infty} \tilde{g}(k) z^{j+kn} \in \har{\alpha,j}$, then
  \begin{equation*}
    \begin{split}
      \norm{\sum_{k=0}^{\infty} \tilde{g}(k) z^k}^2 &= \sum_{k=0}^{\infty} \abs{\tilde{g}(k)}^2 \alpha(k)^2 \\
        &\leq c_1^2 \sum_{k=0}^{\infty} \abs{\tilde{g}(k)}^2 \alpha(j+kn)^2
        = c_1^2 \norm{g}^2, % < \infty,
    \end{split}
  \end{equation*}
  which implies that $f = \sum_{k=0}^{\infty} \tilde{g}(k) z^k \in \har{\alpha}$ and $g = X_{\alpha,j} f$. Thus, $X_{\alpha,j}$ is a surjection. % $\norm{f} \leq c_1 \norm{g}$.
  Therefore, $X_{\alpha,j}$ an invertible operator.

  The remaining conclusions can be easily checked.
\end{proof}

\begin{rem}\label{property A rem}
  Form the assumption (\ref{equation 1.1}), we can see that a weighted Hardy space $\har{\alpha}$ has property A for a positive integer $n$ if and only if there exists $c_{1}=c_{1}(n)>0$ and $c_{2}=c_{2}(n)>0$ such that
  $$ c_{1} \leq \frac{\alpha(n-1+kn)}{\alpha(k)} \leq c_{2}  $$
  for any $k=0,1,\dots$.
\end{rem}

\begin{rem}\label{example 1 rem 3}
  For each $\lambda \in \R$, the weighted Hardy spaces $\har{\alpha_{\lambda}} = \mathrm{H}_{(\lambda)}$ has property A for any positive integer $n$. In fact, this follows Remark \ref{property A rem} since
  $$ \frac{\alpha_{\lambda}(n-1+kn)}{\alpha_{\lambda}(k)} = n^\lambda $$
  for any $k=0,1,\dots$.
\end{rem}

% \begin{Hou}
\begin{rem}\label{example 2 rem 2}
  Each weighted Hardy space $\har{\alpha}$ of polynomial growth has property A for any positive integer $n$.
\end{rem}

\begin{proof}
  The weighted Hardy space $\har{\alpha}$ is of polynomial growth means that there exists $N \in \N$ such that for each $k \in \N$,
  $$ \frac{k+1}{k+N+1} \leq \frac{\alpha(k)}{\alpha(k-1)} \leq \frac{k+N+1}{k+1} $$
  (see \cite{hou2023}).
  Then for each $k=0,1,\dots$, whenever $n-1+kn>k$,
  $$ \prod_{j=k+1}^{n-1+kn} \frac{j+1}{j+N+1} \leq \frac{\alpha(n-1+kn)}{\alpha(k)} = \prod_{j=k+1}^{n-1+kn} \frac{\alpha(j)}{\alpha(j-1)} \leq \prod_{j=k+1}^{n-1+kn} \frac{j+N+1}{j+1}. $$
  Form
%  \begin{equation*}
%    \begin{split}
%      \prod_{j=k+1}^{n-1+kn} \frac{j+N+1}{j+1}
%        &= \frac {\dfrac{(n(k+1)+N)!}{(k+1+N)!}} {\dfrac{(n(k+1))!}{(k+1)!}}
%         = \frac {\dfrac{(n(k+1)+N)!}{(n(k+1))!}} {\dfrac{(k+1+N)!}{(k+1)!}} \\
%        &= \prod_{j=1}^{N} \frac{n(k+1)+j}{k+1+j} \leq \prod_{j=1}^{N} n = n^N
%    \end{split}
%  \end{equation*}
  \begin{equation*}
    \prod_{j=k+1}^{n-1+kn} \frac{j+N+1}{j+1}
    = \prod_{j=1}^{N} \frac{n(k+1)+j}{k+1+j}
    \leq \prod_{j=1}^{N} n
    = n^N,
  \end{equation*}
%  and
%  \begin{equation*}
%    \prod_{j=k+1}^{n-1+kn} \frac{j+1}{j+N+1} = \frac{1}{\prod_{j=k+1}^{n-1+kn} \frac{j+N+1}{j+1}} \geq \frac{1}{n^N},
%  \end{equation*}
  we get
  \begin{equation*}
    \frac{1}{n^N} \leq \frac{\alpha(n-1+kn)}{\alpha(k)} \leq n^N.
  \end{equation*}
  The last inequality holds for each $k=0,1,\dots$. Thus, this remark follows Remark \ref{property A rem}.
\end{proof}
% \end{Hou}

  Let $\har{\alpha}$ and $\har{\beta}$ be two weighted Hardy spaces. It is not hard to check that the following conditions are equivalent:

  (a) $\har{\alpha} \subseteq \har{\beta}$;

  (b) $1 \in \mult(\har{\alpha},\har{\beta})$;

  (c) $\sup_{k \geq 0} \frac{\beta(k)}{\alpha(k)} < \infty$.

  If one of the conditions above is true, the multiplication operator $M_{\alpha,\beta,1}$ is the inclusion mapping $\iota \colon \har{\alpha} \to \har{\beta}$. In addition, $\hol(\DD) \subseteq \mult(\har{\alpha}) \subseteq \mult(\har{\alpha},\har{\beta})$.

\begin{rem}\label{example 1 rem 4}
  Let $\lambda,\mu \in \R$. The weighted Hardy spaces $\har{\alpha_{\lambda}} = \mathrm{H}_{(\lambda)}$ and $\har{\alpha_{\mu}} = \mathrm{H}_{(\mu)}$ satisfy $\mathrm{H}_{(\lambda)} \subseteq \mathrm{H}_{(\mu)}$ if and only if $\lambda - \mu \geq 0$.
\end{rem}

\begin{prop}\label{prop 3.6}
  Let $\har{\alpha}$ and $\har{\beta}$ be two weighted Hardy spaces with property A for a positive integer $n$. Suppose $\har{\alpha} \subseteq \har{\beta}$. Denote $T = T_{\alpha,\beta,1}$, then $T^n$ is similar to $T \oplus (\oplus_{j=1}^{n-1} \tilde{T})$, where $\tilde{T} = T_{\alpha,\beta,z}$.
\end{prop}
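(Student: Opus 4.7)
The plan is to decompose $(T^n)^*$ into invariant pieces obtained by combining the orthogonal splittings $\har{\gamma} = \bigoplus_{j=0}^{n-1} \har{\gamma,j}$ ($\gamma \in \{\alpha,\beta\}$) supplied by Lemma \ref{lem 3.2}, and to identify each piece with $T^*$ or $\tilde T^*$ using the operators $X_{\alpha,j}$ and $X_{\beta,j}$. I work with the adjoint $(T^n)^*$ rather than $T^n$ because its $(2,1)$-block is a genuine forward shift on the decomposition rather than an adjoint shift; similarity is preserved under taking adjoints, so the conclusion dualizes back at the end.

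An easy induction exploiting $M_{\alpha,z}^* M_{\alpha,\beta,1}^* = M_{\alpha,\beta,1}^* M_{\beta,z}^*$ gives
\[ T^n = \begin{pmatrix} M_{\alpha,z^n}^* & n\, M_{\alpha,\beta,z^{n-1}}^* \\ 0 & M_{\beta,z^n}^* \end{pmatrix}. \]
A direct power-series inspection shows that multiplication by $z^{n-1}$ shifts the subspace index by $-1 \pmod n$: it sends $\har{\alpha,0}$ into $\har{\beta,n-1}$ and $\har{\alpha,j}$ into $\har{\beta,j-1}$ for $j \geq 1$. Consequently the subspaces
\[ E_j := \har{\alpha,j} \oplus \har{\beta,(j-1)\bmod n}, \qquad j = 0, 1, \dots, n-1, \]
are pairwise orthogonal, have sum $\har{\alpha} \oplus \har{\beta}$, and each is invariant under $(T^n)^*$.

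For each $j$, set $Y_j := X_{\alpha,j} \oplus X_{\beta,(j-1)\bmod n} \colon \har{\alpha} \oplus \har{\beta} \to E_j$, which is invertible by Lemma \ref{lem 3.2}. Combining the intertwining $z^n X_{\gamma,l} = X_{\gamma,l} M_{\gamma,z}$ of Lemma \ref{lem 3.2} with the shift identity $z^{n-1} X_{\alpha,j} u = X_{\beta,(j-1)\bmod n}(z^{\epsilon_j} u)$, where $\epsilon_0 = 0$ and $\epsilon_j = 1$ for $j \geq 1$, a short computation yields
\[ Y_j^{-1}(T^n)^*|_{E_j}\, Y_j = \begin{pmatrix} M_{\alpha,z} & 0 \\ n\, M_{\alpha,\beta,h_j} & M_{\beta,z} \end{pmatrix}, \qquad h_0 = 1,\ h_j = z\ (j \geq 1). \]
Conjugation by $\mathrm{diag}(I, n^{-1} I)$ absorbs the scalar $n$, so this restriction is similar to $T^*$ when $j=0$ and to $\tilde T^*$ when $j \geq 1$.

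Assembling $\bigoplus_j Y_j$ into one invertible operator yields $(T^n)^* \sim T^* \oplus \bigoplus_{j=1}^{n-1} \tilde T^*$, and taking adjoints gives the claim. The main obstacle is identifying the correct invariant subspaces: the naive guess $\har{\alpha,j} \oplus \har{\beta,j}$ is not $(T^n)^*$-invariant because of the $-1$ index shift produced by $z^{n-1}$, and one must twist the $\beta$-index by this shift before the decomposition aligns with the similarity operators supplied by Lemma \ref{lem 3.2}.
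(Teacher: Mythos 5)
Your proof is correct and follows essentially the same route as the paper's: both pass to adjoints, form the twisted invariant subspaces $\har{\alpha,j}\oplus\har{\beta,(j-1)\bmod n}$, and use the operators $X_{\alpha,j}$, $X_{\beta,j}$ of Lemma \ref{lem 3.2} to identify the $j=0$ block with $T^*$ and the others with $\tilde T^*$, absorbing the factor $n$ by a diagonal conjugation. The only cosmetic difference is that the paper removes the scalar $n$ globally before decomposing, whereas you absorb it blockwise.
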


\begin{proof}
  It is enough to prove $(T^*)^n \sim T^* \oplus (\oplus_{j=1}^{n-1} \tilde{T}^*)$, where
  $$ T^*=\begin{pmatrix}
           M_{\alpha,z} & 0 \\
           M_{\alpha,\beta,1} & M_{\beta,z} \\ \end{pmatrix},\
  \tilde{T}^*=\begin{pmatrix}
           M_{\alpha,z} & 0 \\
           M_{\alpha,\beta,z} & M_{\beta,z} \\ \end{pmatrix}. $$

  From $M_{\alpha,\beta,1} M_{\alpha,z} = M_{\beta,z} M_{\alpha,\beta,1}$, it can be calculated that
  $$ (T^*)^n=\begin{pmatrix}
               M_{\alpha,z^n} & 0 \\
               n M_{\alpha,\beta,z^{n-1}} & M_{\beta,z^n} \end{pmatrix}. $$
  Since
  $$ \begin{pmatrix}
       I & 0 \\
       0 & \frac{1}{n} I \\ \end{pmatrix}
     (T^*)^n \begin{pmatrix}
       I & 0 \\
       0 & \frac{1}{n} I \\ \end{pmatrix}^{-1}
    = \begin{pmatrix}
       M_{\alpha,z^n} & 0 \\
       M_{\alpha,\beta,z^{n-1}} & M_{\beta,z^n} \\ \end{pmatrix}
    =: A, $$
  it followed that $(T^*)^n \sim A$.

  Let
  \begin{align*}
    K_0  &= \har{\alpha,0} \oplus \har{\beta,n-1}, \\
    K_1 &= \har{\alpha,1} \oplus \har{\beta,0}, \\
    & \cdots \\
    K_{n-1} &= \har{\alpha,n-1} \oplus \har{\beta,n-2},
  \end{align*}
  then
  $$ \har{\alpha} \oplus \har{\beta} = K_0 \dotplus K_1 \dotplus \cdots \dotplus K_{n-1} $$
  is Hilbert direct sum. One can check that $\har{\alpha,j}$ and $\har{\beta,j}$ are invariant subspace of $M_{\alpha,z^n}$ and $M_{\beta,z^n}$, respectively, and that
  \begin{align*}
    M_{\alpha,\beta,z^{n-1}}(\har{\alpha,0}) & \subseteq \har{\beta,n-1}, \\
    M_{\alpha,\beta,z^{n-1}}(\har{\alpha,1}) & \subseteq \har{\beta,0}, \\
    \cdots \\
    M_{\alpha,\beta,z^{n-1}}(\har{\alpha,n-1}) & \subseteq \har{\beta,n-2}.
  \end{align*}
  Hence, $K_0,K_1,\dots,K_{n-1}$ are invariant subspace of $A$.

  To complete the proof, it is enough to prove that
  $$ A|_{K_0} \sim T^*,\; A|_{K_1} \sim \tilde{T}^*,\; \dots,\; A|_{K_{n-1}} \sim \tilde{T}^*. $$ % \;
  This implies that $(T^*)^n \sim A \sim T^* \oplus (\oplus_{j=1}^{n-1} \tilde{T}^*)$.

It is not hard to see that
  \begin{align*}
    A|_{K_0} &= \begin{pmatrix}
                 M_{\alpha,z^n}|_{\har{\alpha,0}} & 0 \\
                 P_{\har{\beta,n-1}} M_{\alpha,\beta,z^{n-1}} P_{\har{\alpha,0}} & M_{\beta,z^n}|_{\har{\beta,n-1}} \\ \end{pmatrix}, \\
    A|_{K_1} &= \begin{pmatrix}
                 M_{\alpha,z^n}|_{\har{\alpha,1}} & 0 \\
                 P_{\har{\beta,0}} M_{\alpha,\beta,z^{n-1}} P_{\har{\alpha,1}} & M_{\beta,z^n}|_{\har{\beta,0}} \\ \end{pmatrix}, \\
             & \cdots \\
    A|_{K_{n-1}} &= \begin{pmatrix}
                 M_{\alpha,z^n}|_{\har{\alpha,n-1}} & 0 \\
                 P_{\har{\beta,n-2}} M_{\alpha,\beta,z^{n-1}} P_{\har{\alpha,n-1}} & M_{\beta,z^n}|_{\har{\beta,n-2}} \\ \end{pmatrix}.
  \end{align*}
  Lemma \ref{lem 3.2} shows that for any $j=0,1,\dots,n-1$, the maps $ X_{\alpha,j} \colon \har{\alpha}$$ \to \har{\alpha,j} $ and $ X_{\beta,j} \colon \har{\beta}$$ \to \har{\beta,j} $
  are invertible operators. Take the invertible operators
  $$ X_0 = \begin{pmatrix}
             X_{\alpha,0} & 0 \\
             0 & X_{\beta,n-1} \\ \end{pmatrix},\,
     X_1 = \begin{pmatrix}
             X_{\alpha,1} & 0 \\
             0 & X_{\beta,0} \\ \end{pmatrix},\,
     \dots,\,
     X_{n-1} = \begin{pmatrix}
             X_{\alpha,n-1} & 0 \\
             0 & X_{\beta,n-2} \\ \end{pmatrix}. $$ %\,
  By calculation we obtain that $M_{\alpha,z^n}|_{\har{\alpha,j}} X_{\alpha,j} = X_{\alpha,j} M_{\alpha,z}$, $M_{\beta,z^n}|_{\har{\beta,j}} X_{\beta,j} = X_{\beta,j} M_{\beta,z}$ and that
  \begin{align*}
    P_{\har{\beta,n-1}} M_{\alpha,\beta,z^{n-1}} P_{\har{\alpha,0}} X_{\alpha,0} &= X_{\beta,n-1} M_{\alpha,\beta,1}, \\
    P_{\har{\beta,0}} M_{\alpha,\beta,z^{n-1}} P_{\har{\alpha,1}} X_{\alpha,1} &= X_{\beta,0} M_{\alpha,\beta,z}, \\
    & \cdots \\
    P_{\har{\beta,n-2}} M_{\alpha,\beta,z^{n-1}} P_{\har{\alpha,n-1}} X_{\alpha,n-1} &= X_{\beta,n-2} M_{\alpha,\beta,z}. \\
  \end{align*}
  The above equation indicates that
  $$ A|_{K_0} X_0 = X_0 T^*,\ A|_{K_1} X_1 = X_1 \tilde{T}^*,\ \dots,\ A|_{K_{n-1}} X_{n-1} = X_{n-1} \tilde{T}^*, $$ % \
  and hence
  $$ A|_{K_0} \sim T^*,\; A|_{K_1} \sim \tilde{T}^*,\; \dots,\; A|_{K_{n-1}} \sim \tilde{T}^*. $$ % \;
\end{proof}

\subsection{Dissimilarity}

  In the remaining part of this section, we need some conclusions related to strongly irreducible operators (`strongly irreducible' is also abbreviated as `(SI)').

  % strongly irreducible operators ((SI) operators).

  Denote $\mathrm{K}_0 (\mathcal{B})$ for the $\mathrm{K}_0$-group of a Banach algebra $\mathcal{B}$.
  For a bounded linear operator $A$ on a Hilbert space $H$, denote $\set{A}'$ for the commutant algebra of $A$ and $A^{(n)}$ for the $n$ copies $\oplus_n^{n} A$ of $A$.
  Then the following theorem is Theorem 1 in \cite{JIANG2005}.

\begin{thm}\label{Jiang}
  Let $T = A_1^{(n_1)} \oplus A_2^{(n_2)} \oplus \cdots \oplus A_k^{(n_k)}$, where $A_1,A_2,\dots,A_k$ are strongly irreducible Cowen-Douglas operators, $A_i$ and $A_j$ are not similar whenever $i \neq j$, and $n_1,n_2,\dots,n_k$ are positive integers. Then
  $ \mathrm{K}_0 (\set{T}') \cong \mathbb{Z}^k $.
\end{thm}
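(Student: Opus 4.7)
The plan is to compute $K_0(\set{T}')$ by identifying $\set{T}'$ modulo its Jacobson radical as a finite-dimensional semisimple algebra, and then invoking the standard principle that $K_0$ of a unital Banach algebra is invariant under quotient by the Jacobson radical (since $1+\mathrm{rad}$ lies in the invertibles, idempotents lift and any two lifts of a given class are conjugate). Once I establish $\set{T}'/\mathrm{rad}(\set{T}') \cong \bigoplus_{i=1}^{k} M_{n_i}(\mathbb{C})$, the conclusion follows immediately from $K_0(M_{n_i}(\mathbb{C})) \cong \mathbb{Z}$ and additivity of $K_0$ on finite direct sums.

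To set up the identification, I would decompose each element of $\set{T}'$ as a $k \times k$ block matrix $(X_{ij})$ on $\bigoplus_{i=1}^{k} H_i^{n_i}$, with $X_{ij}$ in the intertwining space $\mathcal{I}_{ij} := \set{X : A_i^{(n_i)} X = X A_j^{(n_j)}}$. The diagonal blocks satisfy $\mathcal{I}_{ii} = \set{A_i^{(n_i)}}' \cong M_{n_i}(\set{A_i}')$, and the key structural input is the fact (known for strongly irreducible Cowen-Douglas operators) that $\set{A_i}'/\mathrm{rad}(\set{A_i}') \cong \mathbb{C}$. Consequently $\mathcal{I}_{ii}/\mathrm{rad}(\mathcal{I}_{ii}) \cong M_{n_i}(\mathbb{C})$, so the diagonal part of $\set{T}'$ already yields the right semisimple quotient one factor at a time.

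It then remains to show that the subspace $\mathcal{J}$ generated by the off-diagonal blocks $\mathcal{I}_{ij}$ ($i \neq j$) together with $\bigoplus_i \mathrm{rad}(\mathcal{I}_{ii})$ is a two-sided ideal and coincides with $\mathrm{rad}(\set{T}')$. The containment $\mathcal{J} \supseteq \mathrm{rad}(\set{T}')$ is automatic once the quotient $\set{T}'/\mathcal{J} \cong \bigoplus_i M_{n_i}(\mathbb{C})$ is semisimple. The nontrivial content is that $\mathcal{J}$ is closed under multiplication, equivalently that every cyclic product $X_{i_1 i_2} X_{i_2 i_3} \cdots X_{i_m i_1}$ with the indices $i_1,\dots,i_m$ not all equal lies in $\mathrm{rad}(\mathcal{I}_{i_1 i_1})$. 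I would argue this by contradiction: if some cyclic product had nonzero image in $M_{n_{i_1}}(\mathbb{C})$, one could combine it with its symmetric partner to manufacture an idempotent in $\set{T}'$ whose range and kernel mix two distinct summands, and then via the holomorphic eigenbundle structure characteristic of Cowen-Douglas operators this would yield an invertible intertwiner between $A_{i_1}$ and some $A_{i_s}$ ($s \neq 1$), contradicting $A_{i_1} \not\sim A_{i_s}$.

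The main obstacle is precisely this off-diagonal radical computation: promoting a non-radical cyclic product of intertwiners to a genuine similarity between two strongly irreducible Cowen-Douglas operators. This is where both hypotheses are indispensable, strong irreducibility ensuring that no nontrivial invariant projection hides inside each $\set{A_i}'$ to soak up such a cyclic product, and the Cowen-Douglas eigenbundle structure providing the rigidity needed to upgrade an algebraic invertibility modulo radical into an actual operator similarity.
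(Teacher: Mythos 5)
First, note that the paper does not prove this statement at all: it is quoted verbatim as Theorem 1 of \cite{JIANG2005}, so the only meaningful comparison is with Jiang's actual argument in that reference. Your general principle is fine --- for a unital Banach algebra the spectrum is unchanged modulo the Jacobson radical, so Riesz idempotents lift and $\mathrm{K}_0(\mathcal{A}) \cong \mathrm{K}_0(\mathcal{A}/\mathrm{rad}\,\mathcal{A})$ --- but the ``key structural input'' on which your whole plan rests is false. For a strongly irreducible Cowen--Douglas operator $A$ it is \emph{not} true that $\set{A}'/\mathrm{rad}\set{A}' \cong \C$. Take $A = M_z^*$ on the classical Hardy space $\mathrm{H}^2(\D)$: this operator is in $B_1(\D)$ and strongly irreducible, and its commutant is $\set{M_\phi^* \colon \phi \in \mathrm{H}^\infty(\D)} \cong \mathrm{H}^\infty(\D)$, a semisimple commutative Banach algebra; hence the quotient by the radical is all of $\mathrm{H}^\infty(\D)$, infinite-dimensional with an abundance of maximal ideals. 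What Jiang actually proved is only that $\set{A}'/\mathrm{rad}\set{A}'$ is \emph{commutative} for such $A$, which is much weaker than one-dimensional. Consequently $\set{T}'/\mathrm{rad}\set{T}'$ is not $\oplus_{i=1}^k M_{n_i}(\C)$, and the finite-dimensional semisimple identification collapses at its first step. Indeed, already in the simplest case $k=1$, $n_1=1$, $T = M_z^*$ on $\mathrm{H}^2(\D)$, the conclusion $\mathrm{K}_0(\set{T}') \cong \mathbb{Z}$ amounts to $\mathrm{K}_0(\mathrm{H}^\infty(\D)) \cong \mathbb{Z}$, a genuinely nontrivial theorem (due to Su\'arez, resting on the corona theorem); no argument that factors through a finite-dimensional quotient can reach it.

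The same false local-algebra property is what your off-diagonal step secretly invokes: to upgrade a cyclic product of intertwiners that is merely \emph{not in the radical} of $\set{A_{i_1}}'$ into an invertible intertwiner between $A_{i_1}$ and $A_{i_s}$, you need ``non-radical implies invertible modulo radical,'' which holds precisely when the quotient is a division algebra --- exactly the claim that fails. The route in \cite{JIANG2005} avoids computing the radical altogether. Its engine is the uniqueness (up to similarity and permutation) of the strongly irreducible decomposition of a Cowen--Douglas operator: any idempotent $P \in M_m(\set{T}') = \set{T^{(m)}}'$ restricts $T^{(m)}$ to $\Ran P$, which is again Cowen--Douglas and hence similar to $\oplus_{i=1}^k A_i^{(m_i)}$ for a uniquely determined tuple $(m_1,\dots,m_k)$; the assignment $[P] \mapsto (m_1,\dots,m_k)$ identifies the semigroup of similarity classes of idempotents over $\set{T}'$ with (a subsemigroup generating) $\mathbb{N}^k$, and the Grothendieck construction then gives $\mathrm{K}_0(\set{T}') \cong \mathbb{Z}^k$. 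That uniqueness theorem --- proved with ordered K-theory and holomorphic-bundle techniques, not by an algebraic radical computation --- is the hard core your sketch would need to reconstruct, and it is precisely what the present paper imports wholesale by citation.
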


  % $\mathrm{K}_0$-group.

%  \vskip 5pt

  The following proposition, which is Proposition 2.22 in \cite{JI2017}, gives a characterization of strong irreducibility in $FB_2(\Omega)$.

\begin{lem}\label{lem 3.3}
  An operator
  $$ T=\begin{pmatrix}
    T_0 & S \\
    0 & T_1 \end{pmatrix} $$
  in $FB_2(\Omega)$ is strongly irreducible if and only if $S \notin \Ran\sigma_{T_0,T_1}$.
\end{lem}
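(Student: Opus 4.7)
The plan is to prove both directions by contrapositive. Write $T = \begin{pmatrix} T_0 & S \\ 0 & T_1 \end{pmatrix}$ throughout.

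For the easy direction ($\Leftarrow$ as a contrapositive), suppose $S = T_0 X - X T_1 \in \Ran\sigma_{T_0,T_1}$. I would exhibit an explicit similarity between $T$ and the direct sum $T_0 \oplus T_1$. Setting $Y = \begin{pmatrix} I & -X \\ 0 & I \end{pmatrix}$, which is invertible with $Y^{-1} = \begin{pmatrix} I & X \\ 0 & I \end{pmatrix}$, a routine block computation yields $Y^{-1} T Y = \begin{pmatrix} T_0 & S - (T_0 X - X T_1) \\ 0 & T_1 \end{pmatrix} = T_0 \oplus T_1$. Since $T_0 \oplus T_1$ admits the non-trivial idempotent $I \oplus 0$, pulling this back by $Y$ produces a non-trivial idempotent in $\{T\}'$, so $T$ is not strongly irreducible.

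For the harder direction ($\Rightarrow$ as a contrapositive), suppose $T$ is not strongly irreducible and fix a non-trivial idempotent $P = (P_{ij})_{i,j=0,1}$ in $\{T\}'$. Expanding $PT = TP$ produces four block equations; the most important are the $(1,0)$-entry $P_{10} T_0 = T_1 P_{10}$ and the $(0,0)$-entry $S P_{10} = P_{00} T_0 - T_0 P_{00}$. The crux of the proof is to show $P_{10} = 0$. Here I would exploit the rigidity of $B_1(\Omega)$ operators, in the spirit of Lemma 2.18 and Theorem 2.19 of \cite{JI2017} that underpin the $\Ker\sigma \cap \Ran\sigma = \set{0}$ phenomenon appearing as Proposition \ref{prop 3.2}: combining the relation $P_{10} P_{00} + P_{11} P_{10} = P_{10}$ from $P^2 = P$ with the existence of the non-zero intertwiner $S$ satisfying $T_0 S = S T_1$, one forces $P_{10}$ to lie in an intersection that must be trivial. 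This is the single genuine obstacle in the argument.

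Once $P_{10} = 0$, the diagonal blocks $P_{00}$ and $P_{11}$ are idempotents in $\{T_0\}'$ and $\{T_1\}'$ respectively. Since every operator in $B_1(\Omega)$ is strongly irreducible, each of $P_{00}, P_{11}$ equals $0$ or $I$; the non-triviality of $P$ together with $P^2 = P$ rules out $P_{00} = P_{11}$, so after possibly replacing $P$ by $I - P$ we may assume $P_{00} = I$ and $P_{11} = 0$. Substituting into the $(0,1)$-entry $P_{00} S + P_{01} T_1 = T_0 P_{01} + S P_{11}$ immediately yields $S = T_0 P_{01} - P_{01} T_1 \in \Ran\sigma_{T_0,T_1}$, which is the required conclusion. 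Beyond the step $P_{10}=0$, everything reduces to block bookkeeping combined with the strong irreducibility of $B_1(\Omega)$ operators.
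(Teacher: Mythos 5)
The paper offers no proof of this statement at all: it is quoted verbatim as Proposition 2.22 of \cite{JI2017}, so there is no in-paper argument to compare against, only the citation. Your skeleton does follow the standard proof of that proposition. The direction ``$S\in\Ran\sigma_{T_0,T_1}$ implies $T$ not strongly irreducible'' via the explicit similarity $Y^{-1}TY=T_0\oplus T_1$ is complete and correct, and the endgame after $P_{10}=0$ (the diagonal blocks become idempotents commuting with the strongly irreducible operators $T_0$ and $T_1$, hence each is $0$ or $I$; the cases $P_{00}=P_{11}$ force $P$ trivial; the remaining case reads $S\in\Ran\sigma_{T_0,T_1}$ off the $(0,1)$-entry) is also fine.

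The genuine gap is precisely the step you yourself flag as the obstacle: you assert $P_{10}=0$ but do not prove it, and the ingredients you point to would not deliver it. The relation $P_{10}P_{00}+P_{11}P_{10}=P_{10}$ from $P^2=P$ plays no role here, and the relevant intersection is not the off-diagonal one of Proposition \ref{prop 3.2} (which requires the extra hypothesis (\ref{cond 1}) and is simply unavailable for a general operator in $FB_2(\Omega)$) but the diagonal one for $T_0$ alone. The correct route is: the $(1,0)$-entry gives $P_{10}T_0=T_1P_{10}$, hence $T_0(SP_{10})=ST_1P_{10}=(SP_{10})T_0$, so $SP_{10}\in\Ker\sigma_{T_0}$; the $(0,0)$-entry gives $SP_{10}=P_{00}T_0-T_0P_{00}\in\Ran\sigma_{T_0}$; since $T_0\in B_1(\Omega)$ one has $\Ker\sigma_{T_0}\cap\Ran\sigma_{T_0}=\set{0}$, whence $SP_{10}=0$. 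But this only kills $SP_{10}$, not $P_{10}$: a further argument is still required, namely that the intertwiners act on holomorphic frames of the eigenvector line bundles by $P_{10}\gamma_0(w)=\phi(w)\gamma_1(w)$ and $S\gamma_1(w)=\psi(w)\gamma_0(w)$ with $\phi,\psi$ holomorphic, so that $\phi\psi\equiv0$ together with $\psi\not\equiv0$ (because $S\neq0$) forces $\phi\equiv0$, and then $\vee_{w\in\Omega}\Ker(T_0-w)=H_0$ yields $P_{10}=0$. This last step --- which is exactly where the defining hypothesis $S\neq0$ of $FB_2(\Omega)$ enters --- is entirely absent from your proposal, so as written the ``only if'' direction is not established.
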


\begin{lem}\label{lem 3.4}
  Let $\har{\alpha}$ and $\har{\beta}$ be two weighted Hardy spaces, where the weight sequences $\alpha$ and $\beta$ satisfy condition (\ref{cond 1}). Then

  (a) for any non-zero function $h$ in $\mult(\har{\alpha},\har{\beta})$, the operator $T_{\alpha,\beta,h}$ is strongly irreducible.

  (b) for any non-zero functions $h$ and $\tilde{h}$ in $\mult(\har{\alpha},\har{\beta})$ such that $h$ and $\tilde{h}$ have different zero points on $\D$, the operators $T_{\alpha,\beta,h}$ and $T_{\alpha,\beta,\tilde{h}}$ are not similar.
\end{lem}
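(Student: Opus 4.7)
\medskip

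My plan is to derive both parts of Lemma \ref{lem 3.4} directly from the machinery already built up in Section \ref{step 1}. Part (a) will follow from the characterization of strong irreducibility in $FB_2(\Omega)$ (Lemma \ref{lem 3.3}) combined with Proposition \ref{prop 3.2}, while part (b) will follow from Proposition \ref{prop 3.4} together with the fact that multipliers are analytic functions on $\D$.

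For part (a), by Lemma \ref{lem 3.3} applied to the $FB_2(\D)$ operator $T_{\alpha,\beta,h}$, strong irreducibility amounts to the assertion that the corner entry $M_{\alpha,\beta,h}^*$ does \emph{not} lie in $\Ran\sigma_{M_{\alpha,z}^*,M_{\beta,z}^*}$. The key observation is that $M_{\alpha,\beta,h}^*$ always lies in $\Ker\sigma_{M_{\alpha,z}^*,M_{\beta,z}^*}$: taking adjoints in the intertwining relation $M_{\alpha,\beta,h} M_{\alpha,z} = M_{\beta,z} M_{\alpha,\beta,h}$ gives $M_{\alpha,z}^* M_{\alpha,\beta,h}^* = M_{\alpha,\beta,h}^* M_{\beta,z}^*$. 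Hence if $M_{\alpha,\beta,h}^*$ were in the range of $\sigma_{M_{\alpha,z}^*,M_{\beta,z}^*}$, then by Proposition \ref{prop 3.2} (whose hypothesis is exactly condition (\ref{cond 1})) it would have to be zero, forcing $h=0$, contradicting our assumption. So $M_{\alpha,\beta,h}^*\notin\Ran\sigma_{M_{\alpha,z}^*,M_{\beta,z}^*}$, and strong irreducibility follows.

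For part (b), I argue by contradiction. Suppose $T_{\alpha,\beta,h} \sim T_{\alpha,\beta,\tilde h}$. Then Proposition \ref{prop 3.4} supplies $h_1 \in \mult(\har{\alpha})$ with $1/h_1 \in \mult(\har{\alpha})$ and $h_2 \in \mult(\har{\beta})$ with $1/h_2 \in \mult(\har{\beta})$ such that
$$ h = \tilde h\, h_1 h_2 $$
pointwise on $\D$. Since $\mult(\har{\alpha}) \subseteq \hol(\D)$ and similarly for $\mult(\har{\beta})$, both $h_1$ and $1/h_1$ are analytic on $\D$, forcing $h_1$ to be zero-free on $\D$; the same reasoning applies to $h_2$. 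Consequently $h_1 h_2$ is non-vanishing on $\D$, so the identity $h = \tilde h\, h_1 h_2$ shows that $h$ and $\tilde h$ have exactly the same zero set on $\D$, contradicting the hypothesis that they have different zero points.

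The whole argument is quite short because the heavy lifting has already been done in Propositions \ref{prop 3.2} and \ref{prop 3.4}; I do not anticipate any real obstacle. The only step requiring mild care is the observation that membership of $h_1$ and $1/h_1$ in the multiplier algebra forces $h_1$ to be zero-free on $\D$ (and symmetrically for $h_2$), which is what converts the factorization $h = \tilde h\, h_1 h_2$ into equality of zero sets.
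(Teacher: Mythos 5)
Your proof is correct and follows essentially the same route as the paper: part (a) combines Lemma \ref{lem 3.3} with Proposition \ref{prop 3.2} exactly as the paper does, and part (b) is the paper's one-line deduction from Proposition \ref{prop 3.4}, which you have merely spelled out (correctly) by noting that invertibility of $h_1$ and $h_2$ in their multiplier algebras forces them to be zero-free on $\D$.
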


\begin{proof}
  (a) Since $\sigma_{M_{\alpha,z}^*,M_{\beta,z}^*} (M_{\alpha,\beta,h}^*) =0$, it follows that $M_{\alpha,\beta,h}^* \notin \Ran\sigma_{M_{\alpha,z}^*,M_{\beta,z}^*}$ according to
  Proposition \ref{prop 3.2}. Then $T_{\alpha,\beta,h}$ is strongly irreducible by Lemma \ref{lem 3.3}.

  (b) Clearly, this is a corollary of Proposition \ref{prop 3.4}.
\end{proof}

\vskip 5pt

\begin{prop}\label{prop 3.9}
  Let $\har{\alpha}$ and $\har{\beta}$ be two weighted Hardy spaces with property A for a positive integer $n \geq 2$. Suppose $\har{\alpha} \subseteq \har{\beta}$ and suppose the weight sequences $\alpha$ and $\beta$ satisfy condition (\ref{cond 1}). Denote $T = T_{\alpha,\beta,1}$, then $T^n$ is not similar to $\oplus_1^n T$.
\end{prop}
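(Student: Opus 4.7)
The strategy is to combine Proposition \ref{prop 3.6} with the $\mathrm{K}_0$-group invariant from Theorem \ref{Jiang}. By Proposition \ref{prop 3.6},
$$ T^n \sim T \oplus \bigl(\oplus_{j=1}^{n-1} \tilde{T}\bigr), $$
where $T = T_{\alpha,\beta,1}$ and $\tilde{T} = T_{\alpha,\beta,z}$. Thus, to show $T^n \not\sim T^{(n)} = \oplus_1^n T$, it suffices to distinguish the two sides by a similarity invariant, and $\mathrm{K}_0$ of the commutant is the natural candidate.

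The key intermediate step is to check that the two building blocks $T$ and $\tilde T$ fit the hypotheses of Theorem \ref{Jiang}. First, both $T$ and $\tilde T$ lie in $FB_2(\D) \subseteq B_2(\D)$, because the symbols $1$ and $z$ are non-zero elements of $\mult(\har{\alpha},\har{\beta})$ (using $\har{\alpha} \subseteq \har{\beta}$ for $1$, and then $z \cdot f \in \har{\beta}$ for any $f \in \har{\alpha}$). Second, by Lemma \ref{lem 3.4}(a), since condition (\ref{cond 1}) is assumed, both $T$ and $\tilde{T}$ are strongly irreducible. Third, the symbols $1$ and $z$ have different zero sets in $\D$ (namely $\emptyset$ and $\set{0}$), so Lemma \ref{lem 3.4}(b) yields $T \not\sim \tilde{T}$.

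With these three facts in hand, Theorem \ref{Jiang} applies on both sides. On the one hand,
$$ \mathrm{K}_0(\set{T \oplus \tilde{T}^{(n-1)}}') \cong \mathbb{Z}^2, $$
since there are two non-similar strongly irreducible summands $T$ (with multiplicity $1$) and $\tilde{T}$ (with multiplicity $n-1$). On the other hand,
$$ \mathrm{K}_0(\set{T^{(n)}}') \cong \mathbb{Z}, $$
since $T^{(n)}$ consists of $n$ copies of a single strongly irreducible operator. Because $\mathrm{K}_0$ of the commutant is a similarity invariant and $\mathbb{Z}^2 \not\cong \mathbb{Z}$, we conclude $T^n \not\sim T^{(n)}$.

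The only potentially subtle point is verifying that the hypotheses needed for Lemma \ref{lem 3.4} and Theorem \ref{Jiang} are genuinely available: in particular, that $1$ and $z$ both belong to $\mult(\har{\alpha},\har{\beta})$ (immediate from $\har{\alpha} \subseteq \har{\beta}$ and the fact that $z \in \mult(\har{\beta})$ by (\ref{equation 1.1})), and that condition (\ref{cond 1}) is in force, which is explicitly assumed. Once these are noted, the rest is a direct application of the cited results.
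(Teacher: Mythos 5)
Your proposal is correct and follows essentially the same route as the paper: decompose $T^n$ via Proposition \ref{prop 3.6}, use Lemma \ref{lem 3.4} to see that $T$ and $\tilde{T}$ are strongly irreducible and non-similar (via the distinct zero sets of $1$ and $z$), and then distinguish $T^{(1)} \oplus \tilde{T}^{(n-1)}$ from $T^{(n)}$ by the $\mathrm{K}_0$-group of the commutant using Theorem \ref{Jiang}. The only difference is that you spell out the verification that $1, z \in \mult(\har{\alpha},\har{\beta})$, which the paper leaves implicit.
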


\begin{proof}
  Proposition \ref{prop 3.6} shows that $T^n \sim T \oplus (\oplus_{j=1}^{n-1} \tilde{T}) = T^{(1)} \oplus \tilde{T}^{(n-1)}$, where $\tilde{T} = T_{\alpha,\beta,z}$. Lemma \ref{lem 3.4} shows that $T$ and $\tilde{T}$ are strongly irreducible Cowen-Douglas operators and $T \nsim \tilde{T}$.
  Assume $T^n$ is similar to $\oplus_1^n T = T^{(n)}$, then
  $$ \mathrm{K}_0 (\set{T^{(1)} \oplus \tilde{T}^{(n-1)}}') \cong \mathrm{K}_0 (\set{T^n}') \cong \mathrm{K}_0 (\set{T^{(n)}}'). $$
  But from Theorem \ref{Jiang}, it follows that
  $$ \mathrm{K}_0 (\set{T^{(1)} \oplus \tilde{T}^{(n-1)}}') \cong \mathbb{Z}^2,\ \ \mathrm{K}_0 (\set{T^{(n)}}') \cong \mathbb{Z}. $$
  So we get a contradiction since $\mathbb{Z}^2 \ncong \mathbb{Z}$.
\end{proof}

\section{Main results}

  In this section, we complete the proof of the main theorem and give some concrete examples.

\begin{proof}[Proof of Theorem \ref{thm 4.1}]
  Suppose $h \neq 0$. Since the functions in $\aut(\D)$ are finite Blaschke products with order $1$, $T$ is weakly homogeneous. Since $\sup_{k \geq 0} \frac{\beta(k)}{\alpha(k)} < \infty$, i.e. $\har{\alpha} \subseteq \har{\beta}$, $\hol(\DD) \subseteq \mult(\har{\alpha}) \subseteq \mult(\har{\alpha},\har{\beta})$. Hence, according to Proposition \ref{prop 3.5}, $h$ is non-zero everywhere on $\DD$, which implies $\frac{1}{h} \in \hol(\DD)$. Thus, $h,\frac{1}{h} \in \mult(\har{\alpha})$. From Remark \ref{prop 3.4 rem}, we have $T = T_{\alpha,\beta,h} \sim T_{\alpha,\beta,1} =: \hat{T}$. Since $B(z) = z^{n_0}, z \in \D$ is a finite Blaschke product with order $n_0$, it follows that $T^{n_0} = B(T) \sim \oplus_1^{n_0} T$ and hence $\hat{T}^{n_0} \sim \oplus_1^{n_0} \hat{T}$. But this contradicts the conclusion of Proposition \ref{prop 3.9} that $\hat{T}^{n_0} \nsim \oplus_1^{n_0} \hat{T}$.
\end{proof}

\begin{cor}
  Let $\lambda - \mu \in [0,1)$. Let
  $$ T = \begin{pmatrix}
    M_{z}^* & M_{h}^* \\
    0 & M_{z}^* \\ \end{pmatrix}, $$
  on $\mathrm{H}_{(\lambda)} \oplus \mathrm{H}_{(\mu)}$, where $h \in \hol(\DD)$. If for any finite Blaschke product $B$, $B(T)$ is similar to $\oplus_1^n T$, where $n$ is the order of $B$, then $h=0$.
\end{cor}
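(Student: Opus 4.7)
The plan is to deduce this corollary as an immediate specialization of Theorem~\ref{thm 4.1} to the pair $(\har{\alpha_\lambda},\har{\alpha_\mu}) = (\mathrm{H}_{(\lambda)},\mathrm{H}_{(\mu)})$ with $\alpha_\lambda(k) = (k+1)^\lambda$ and $\alpha_\mu(k) = (k+1)^\mu$. Once all the hypotheses of Theorem~\ref{thm 4.1} are checked, the conclusion $h=0$ follows directly.

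First I would invoke Remark~\ref{example 1 rem 2} to assert that $\mathrm{H}_{(\lambda)}$ and $\mathrm{H}_{(\mu)}$ are M\"obius invariant for every real $\lambda,\mu$. Next, Remark~\ref{example 1 rem 3} gives property~A for any positive integer $n$, in particular for some $n_0 \geq 2$ (e.g.\ $n_0 = 2$), so the structural hypotheses on the spaces are satisfied.

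It then remains to verify the two quantitative conditions on the weight sequences. For the first condition, one computes
$$ \sup_{k\geq 0}\frac{\alpha_\mu(k)}{\alpha_\lambda(k)} = \sup_{k\geq 0}(k+1)^{\mu-\lambda}, $$
which is finite precisely because $\lambda - \mu \geq 0$. For the second condition,
$$ \frac{\alpha_\lambda(k)}{k\,\alpha_\mu(k)} = \frac{(k+1)^{\lambda-\mu}}{k} \longrightarrow 0,\ k\to\infty, $$
since $\lambda - \mu < 1$ (this matches Remark~\ref{example 1 rem 1}). Thus both conditions of Theorem~\ref{thm 4.1} are met for $\alpha = \alpha_\lambda$ and $\beta = \alpha_\mu$.

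There is no genuine obstacle here, since every step reduces to a bookkeeping check against an earlier remark. The only potentially subtle point is confirming that when $\lambda = \mu$ the operator $T$ still fits the framework (i.e.\ $M_{z}^*$ on the diagonal refers to $M_{\alpha_\lambda,z}^*$ and $M_{\alpha_\mu,z}^*$, which coincide in this case), and that $h \in \hol(\DD)$ lies in $\mult(\mathrm{H}_{(\lambda)},\mathrm{H}_{(\mu)})$; the latter follows from the inclusion $\hol(\DD) \subseteq \mult(\har{\alpha}) \subseteq \mult(\har{\alpha},\har{\beta})$ noted just before Remark~\ref{example 1 rem 4} together with the containment $\mathrm{H}_{(\lambda)} \subseteq \mathrm{H}_{(\mu)}$ from Remark~\ref{example 1 rem 4}. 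With all hypotheses confirmed, Theorem~\ref{thm 4.1} forces $h=0$.
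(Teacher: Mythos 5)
Your proposal is correct and follows exactly the paper's route: the paper's own proof simply cites Remarks \ref{example 1 rem 1}, \ref{example 1 rem 2}, \ref{example 1 rem 3}, \ref{example 1 rem 4} together with Theorem \ref{thm 4.1}, which are precisely the facts you verify explicitly. Your version just spells out the weight-sequence computations that those remarks encapsulate.
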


\begin{proof}
  This corollary now follows Remarks \ref{example 1 rem 1}, \ref{example 1 rem 2}, \ref{example 1 rem 3}, \ref{example 1 rem 4} and Theorem \ref{thm 4.1}.
\end{proof}

% \begin{Hou}
\begin{cor}
  Let $\har{\alpha}$ and $\har{\beta}$ be two weighted Hardy spaces of polynomial growth. Suppose the weight sequences $\alpha$ and $\beta$ satisfy
  $$ \sup_{k \geq 0} \frac{\beta(k)}{\alpha(k)} < \infty \quad \text{and} \quad \lim_{k \to \infty} \frac{\alpha(k)}{k\beta(k)} = 0. $$
  Denote $T = T_{\alpha,\beta,h}$, where $h \in \hol(\DD)$. If for any finite Blaschke product $B$, $B(T)$ is similar to $\oplus_1^n T$, where $n$ is the order of $B$, then $h=0$.
\end{cor}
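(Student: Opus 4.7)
The plan is to argue by contradiction: assume $h \neq 0$ and exploit the hypothesis for two very different families of finite Blaschke products, namely the degree-one family $\aut(\D)$ and the single monomial $B(z)=z^{n_0}$. The rigidity forced by the first family will pin down $h$ enough to reduce $T$ to a canonical form, and then the second family will collide with the K-theoretic dissimilarity already established in Section 3.

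First I would observe that every $\phi\in\aut(\D)$ is a finite Blaschke product of order $1$, so the assumption of the theorem specializes to $\phi(T)\sim T$ for every disc automorphism $\phi$; in other words, $T$ is weakly homogeneous. The condition $\sup_k \beta(k)/\alpha(k)<\infty$ gives $\har{\alpha}\subseteq\har{\beta}$ and in particular $\hol(\DD)\subseteq \mult(\har{\alpha},\har{\beta})$, so $h\in\hol(\DD)\cap\mult(\har{\alpha},\har{\beta})$. The remaining weight condition $\alpha(k)/(k\beta(k))\to 0$ is exactly hypothesis (\ref{cond 1}) of Proposition \ref{prop 3.5}. Applying that proposition, $h$ is either identically zero on $\DD$ (excluded) or non-vanishing on $\DD$.

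Once $h$ is zero-free on $\DD$, the reciprocal $1/h$ lies in $\hol(\DD)$ and therefore in both $\mult(\har{\alpha})$ and $\mult(\har{\beta})$. Invoking Remark \ref{prop 3.4 rem} with $h_1=1/h$ and $h_2=h$ (so that $1=h\cdot h_1 h_2$ after rescaling) produces an invertible operator implementing $T=T_{\alpha,\beta,h}\sim\hat T:=T_{\alpha,\beta,1}$. Now I specialize the hypothesis of the theorem to the order-$n_0$ Blaschke product $B(z)=z^{n_0}$, getting $T^{n_0}\sim \oplus_1^{n_0}T$; conjugating both sides by the similarity $T\sim\hat T$ yields $\hat T^{n_0}\sim \oplus_1^{n_0}\hat T$. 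This is exactly the statement that Proposition \ref{prop 3.9} rules out, under the standing hypotheses $\har{\alpha}\subseteq\har{\beta}$, property A, and (\ref{cond 1}), which are all in force. Contradiction, so $h=0$.

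The conceptual work has all been done upstream: the non-vanishing dichotomy of Proposition \ref{prop 3.5} (relying on the intertwiner computation of Proposition \ref{prop 3.2}) is what forces $h$ to have no zeros on $\DD$, and the dissimilarity in Proposition \ref{prop 3.9} (which combines the splitting $T^n\sim T\oplus\tilde T^{(n-1)}$ from Proposition \ref{prop 3.6} with strong irreducibility and the K-group computation of Theorem \ref{Jiang}) is what supplies the contradiction. Consequently the proof of Theorem \ref{thm 4.1} itself is a short assembly; I do not anticipate any genuine obstacle beyond verifying that the hypotheses of each cited result are satisfied, the only subtle point being the reduction of $T$ to $\hat T$ via Remark \ref{prop 3.4 rem} once the zero-free conclusion of Proposition \ref{prop 3.5} is available.
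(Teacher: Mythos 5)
Your argument is, in substance, the paper's proof of Theorem \ref{thm 4.1} written out in full, and that part is sound: weak homogeneity from the order-one Blaschke products, the zero-free dichotomy of Proposition \ref{prop 3.5}, the reduction $T=T_{\alpha,\beta,h}\sim T_{\alpha,\beta,1}$ via Remark \ref{prop 3.4 rem} (note your choice $h_1=1/h$, $h_2=h$ gives $h\cdot h_1h_2=h$, not $1$; you want $\tilde h=1$, $h_1=h$, $h_2=1$, which works since $h,1/h\in\hol(\DD)\subseteq\mult(\har{\alpha})$), and finally the collision with Proposition \ref{prop 3.9} for $B(z)=z^{n_0}$. The paper proves this corollary by citing Theorem \ref{thm 4.1} together with two remarks, so your route is the same one unwound.

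The gap is that the entire content of this corollary, as opposed to Theorem \ref{thm 4.1} itself, lies in the two hypotheses you declare to be ``all in force'' without argument. Proposition \ref{prop 3.5} requires $\har{\alpha}$ and $\har{\beta}$ to be M\"obius invariant, and Propositions \ref{prop 3.6} and \ref{prop 3.9} require property A for some $n_0\geq 2$. Neither follows from the two displayed conditions on the weight sequences; both must be extracted from the polynomial growth hypothesis. M\"obius invariance of a weighted Hardy space of polynomial growth is a nontrivial theorem (Theorem 3.1 of \cite{hou2023}, recorded here as Remark \ref{example 2 rem 1}), and property A follows from the estimate $n^{-N}\leq \alpha(n-1+kn)/\alpha(k)\leq n^{N}$, which is derived from the polynomial growth bound $\frac{k+1}{k+N+1}\leq\frac{\alpha(k)}{\alpha(k-1)}\leq\frac{k+N+1}{k+1}$ in Remark \ref{example 2 rem 2}. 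Without these two verifications, the propositions you invoke simply do not apply to the spaces in the statement, so they need to be supplied (or at least cited) rather than asserted.
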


\begin{proof}
  This corollary now follows Remark \ref{example 2 rem 1}, \ref{example 2 rem 2} and Theorem \ref{thm 4.1}.
\end{proof}
% \end{Hou}

  For many weighted Hardy space $\har{\alpha}$, the multiplier algebra $\mult(\har{\alpha})$ has better properties, such as $\mult(\har{\alpha}) = \mathrm{H}^{\infty}(\D)$. In Theorem \ref{thm 4.1}, if a higher condition is required for multiplier algebra of weighted Hardy space, a stronger conclusion can be obtained.

\begin{thm}\label{thm 4.2}
  Let $\har{\alpha}$ and $\har{\beta}$ be two M$\ddot{o}$bius invariant weighted Hardy spaces with property A for some integer $n_0 \geq 2$. Assume $\mathcal{C}(\DD) \cap \hol(\D) \subseteq \mult(\har{\alpha})$. Suppose the weight sequences $\alpha$ and $\beta$ satisfy
  $$ \sup_{k \geq 0} \frac{\beta(k)}{\alpha(k)} < \infty \quad \text{and} \quad \lim_{k \to \infty} \frac{\alpha(k)}{k\beta(k)} = 0. $$
  Denote $T = T_{\alpha,\beta,h}$, where $h \in \mathcal{C}(\DD) \cap \hol(\D)$. If for any finite Blaschke product $B$, $B(T)$ is similar to $\oplus_1^n T$, where $n$ is the order of $B$, then $h=0$.
  % $\sigma(T) \subseteq \DD$.
\end{thm}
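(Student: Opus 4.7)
The plan is to replay the proof of Theorem \ref{thm 4.1} essentially verbatim, with the hypothesis $\hol(\DD) \subseteq \mult(\har{\alpha})$ (used there implicitly to invert $h$) replaced by the explicit assumption $\mathcal{C}(\DD) \cap \hol(\D) \subseteq \mult(\har{\alpha})$. First, I would assume $h \neq 0$ and aim for a contradiction. Since every $\phi \in \aut(\D)$ is a finite Blaschke product of order $1$, the similarity hypothesis forces $\phi(T) \sim T$ for all $\phi \in \aut(\D)$, so $T$ is weakly homogeneous. From $\sup_k \beta(k)/\alpha(k) < \infty$ we have $\har{\alpha} \subseteq \har{\beta}$, and therefore $\mathcal{C}(\DD) \cap \hol(\D) \subseteq \mult(\har{\alpha}) \subseteq \mult(\har{\alpha},\har{\beta})$. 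Proposition \ref{prop 3.5} then applies and yields that $h$ is nonvanishing everywhere on $\DD$.

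The one genuinely new observation is that the disk algebra $\mathcal{C}(\DD) \cap \hol(\D)$ is closed under reciprocation of nonvanishing elements: since $h$ is continuous and nonzero on the compact set $\DD$, the function $1/h$ is continuous on $\DD$, and since $h$ is analytic and nonvanishing on $\D$, $1/h$ is analytic on $\D$. Hence both $h$ and $1/h$ lie in $\mathcal{C}(\DD) \cap \hol(\D) \subseteq \mult(\har{\alpha})$. Applying Remark \ref{prop 3.4 rem} with $\tilde{h} = 1$, $h_1 = h$, and $h_2 = 1$ (note $1$ and $1/1$ trivially lie in $\mult(\har{\beta})$) then gives $T = T_{\alpha,\beta,h} \sim T_{\alpha,\beta,1} =: \hat{T}$.

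To close the argument, I would take $B(z) = z^{n_0}$, which is a finite Blaschke product of order $n_0 \geq 2$. The similarity hypothesis gives $T^{n_0} \sim \oplus_1^{n_0} T$, and transporting through $T \sim \hat{T}$ yields $\hat{T}^{n_0} \sim \oplus_1^{n_0} \hat{T}$, contradicting Proposition \ref{prop 3.9} (whose hypotheses—property A for $n_0$, $\har{\alpha} \subseteq \har{\beta}$, and condition (\ref{cond 1})—are all present). I do not anticipate any serious obstacle beyond correctly verifying the reciprocal-closure property of the disk algebra; the structural machinery (Propositions \ref{prop 3.5}, \ref{prop 3.9} and Remark \ref{prop 3.4 rem}) is imported unchanged from the proof of Theorem \ref{thm 4.1}, and the added flexibility is purely in the regularity hypothesis on $h$.
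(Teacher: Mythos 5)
Your proposal is correct and follows exactly the route the paper intends: the paper's own proof of Theorem \ref{thm 4.2} is simply ``Similar to the proof of Theorem \ref{thm 4.1},'' and you replay that argument while correctly supplying the one point that actually changes, namely that $\mathcal{C}(\DD)\cap\hol(\D)$ is closed under reciprocals of nonvanishing elements so that $h,\tfrac{1}{h}\in\mult(\har{\alpha})$ and Remark \ref{prop 3.4 rem} yields $T_{\alpha,\beta,h}\sim T_{\alpha,\beta,1}$.
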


\begin{proof}
  Similar to the proof of Theorem \ref{thm 4.1}.
\end{proof}

\begin{cor}
  Let $\mathrm{H}^2(\D)$ be the classical Hardy space and $\mathrm{L}^2_a(\D)$ be the classical Bergman space. Let
  $$ T = \begin{pmatrix}
    M_{z}^* & M_{h}^* \\
    0 & M_{z}^* \\ \end{pmatrix}, $$
  on $\mathrm{H}^2(\D) \oplus \mathrm{L}^2_a(\D)$, where $h \in \mathcal{C}(\DD) \cap \hol(\D)$. If for any finite Blaschke product $B$, $B(T)$ is similar to $\oplus_1^n T$, where $n$ is the order of $B$, then $h=0$.
\end{cor}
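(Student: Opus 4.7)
The plan is to realize this as a direct specialization of Theorem \ref{thm 4.2}. Observe that the classical Hardy space $\mathrm{H}^2(\D)$ is the weighted Hardy space $\mathrm{H}_{(0)} = \har{\alpha}$ with weight sequence $\alpha(k) = 1$, while the classical Bergman space $\mathrm{L}^2_a(\D)$ is $\mathrm{H}_{(-1/2)} = \har{\beta}$ with weight sequence $\beta(k) = (k+1)^{-1/2}$. So the first step is to recognize the operator $T$ of the corollary as $T_{\alpha,\beta,h}$ in the notation of the paper, where $h \in \mathcal{C}(\DD) \cap \hol(\D)$.

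Next, I would verify the hypotheses of Theorem \ref{thm 4.2} one by one. M\"obius invariance of both $\har{\alpha}$ and $\har{\beta}$ follows from Remark \ref{example 1 rem 2}, and property A for any positive integer $n$ (in particular, for $n_0 = 2$) follows from Remark \ref{example 1 rem 3}. The condition $\mathcal{C}(\DD) \cap \hol(\D) \subseteq \mult(\har{\alpha})$ is immediate because $\mult(\mathrm{H}^2(\D)) = \mathrm{H}^{\infty}(\D)$ contains every continuous function on $\DD$ that is analytic in $\D$. The growth hypothesis
\[
\sup_{k \geq 0} \frac{\beta(k)}{\alpha(k)} = \sup_{k \geq 0} (k+1)^{-1/2} = 1 < \infty
\]
holds trivially, which also recovers the inclusion $\mathrm{H}^2(\D) \subseteq \mathrm{L}^2_a(\D)$ (this is the same statement as Remark \ref{example 1 rem 4} with $\lambda - \mu = 1/2 \geq 0$).

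Finally, for the limit condition \eqref{cond 1}, compute
\[
\frac{\alpha(k)}{k \beta(k)} = \frac{(k+1)^{1/2}}{k} \longrightarrow 0, \qquad k \to \infty.
\]
Equivalently, in the notation of Remark \ref{example 1 rem 1}, this is $\lambda - \mu = 0 - (-1/2) = 1/2 < 1$. With all hypotheses of Theorem \ref{thm 4.2} verified, the conclusion $h = 0$ follows immediately.

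There is no real obstacle here; every step is a direct check of a listed condition against the two concrete spaces, and all the heavy lifting has been done in the preceding theorem. The only point requiring even a moment's thought is matching the direction of the inclusion $\mathrm{H}^2(\D) \subseteq \mathrm{L}^2_a(\D)$ with the roles of $\alpha$ and $\beta$ in Theorem \ref{thm 4.2}, and confirming that $h \in \mathcal{C}(\DD) \cap \hol(\D)$ indeed belongs to $\mult(\har{\alpha},\har{\beta})$, which is automatic once $\mathcal{C}(\DD) \cap \hol(\D) \subseteq \mult(\har{\alpha})$ and $\har{\alpha} \subseteq \har{\beta}$.
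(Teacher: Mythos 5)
Your proposal is correct and follows essentially the same route as the paper: identify $\mathrm{H}^2(\D)$ and $\mathrm{L}^2_a(\D)$ with $\mathrm{H}_{(0)}$ and $\mathrm{H}_{(-1/2)}$, note that $\mult(\mathrm{H}^2(\D)) = \mathrm{H}^{\infty}(\D)$ absorbs $\mathcal{C}(\DD)\cap\hol(\D)$, and verify the remaining hypotheses of Theorem \ref{thm 4.2} via the remarks on the spaces $\mathrm{H}_{(\lambda)}$ (with $\lambda-\mu = 1/2$ lying in $[0,1)$). You merely spell out the numerical checks that the paper leaves implicit.
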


\begin{proof}
  As will known, $\mult(\mathrm{H}^2(\D))$ and $\mult(\mathrm{L}^2_a(\D))$ are both $H^{\infty}(\D)$. Note that $\mathrm{H}^2(\D)$ is $\mathrm{H}_{(0)}$ and $\mathrm{L}^2_a(\D)$ is $\mathrm{H}_{(-\frac{1}{2})}$. Then this corollary now follows Remarks \ref{example 1 rem 1}, \ref{example 1 rem 2}, \ref{example 1 rem 3}, \ref{example 1 rem 4} and Theorem \ref{thm 4.2}.
\end{proof}

%    Bibliographies can be prepared with BibTeX using amsplain,
%    amsalpha, or (for "historical" overviews) natbib style.
\bibliographystyle{amsplain}
%    Insert the bibliography data here.
\bibliography{YJ}

\end{document}